\theoremstyle{plain}
\newtheorem{theorem}{Теорема}[section]
\newtheorem{lemma}{Лемма}[section]
\newtheorem{proposition}{Утверждение}[section]
\newtheorem{corollary}{Следствие}[section]
\theoremstyle{definition}
\newtheorem{example}{Пример}[section]
\newtheorem{definition}{Определение}[section]
\numberwithin{equation}{section}
\begin{document}

\title[]{О системах подпространств гильбертова пространства,
  удовлетворяющих условиям на угол или коммутации для каждой пары
  подпространств}

\author{А.В.Стрелец, И.С.Фещенко}

\begin{abstract}
Мы изучаем системы подпространств $H_{1},\ldots,H_{n}$ комплексного
гильбертова пространства $H$, удовлетворяющие следующим условиям: для
каждого индекса $i>1$ фиксирован угол $\theta_{1,i}\in(0,\pi/2)$ между
подпространствами $H_{1}$ и $H_{i}$; проекторы на подпространства
$H_{2k}$ и $H_{2k+1}$, $1\leqslant k\leqslant m$, $0\leqslant 2m< n$,
коммутируют; а все остальные пары подпространств $H_{i}$ и $H_{j}$
ортогональны.

Основным инструментом изучения таких систем подпространств является
$G\text{-}$\hspace{0pt}конструкция "--- конструкция системы
подпространств гильбертова пространства по ее оператору Грама.
\end{abstract}

\maketitle

\section{Введение}

\subsection{Системы подпространств}

Изучение систем $L=(V;V_{1},\ldots,V_{n})$ подпространств
$V_{1},\ldots,V_{n}$ конечномерного линейного пространства~$V$, $n\in\mathbb{N}$, в
частности, описание неразложимых четверок подпространств в $V$
\cite{Gelfand}, описание неразложимых представлений в пространстве $V$
конечных частично упорядоченных множеств (см., например,
\cite{Nazarova}), и т.д. являются классическими задачами алгебры
(см. библиографию в \cite{SamStr}).

Пусть $H$ "--- комплексное гильбертово пространство, $H_{k}$,
$1\leqslant k\leqslant n$, "--- набор его подпространств.  Изучение
систем подпространств $S=(H;H_{1},\ldots,H_{n})$ гильбертова
пространства $H$ (или, что тоже самое, наборов соответствующих
ортопроекторов $P_{1},\ldots,P_{n}$) является важной задачей
функционального анализа, которой посвящены многочисленные публикации
(см., например, \cite{SamStr} и библиографию там).

Описание всех неразложимых систем подпространств~$S$ с точностью до
унитарной эквивалентности хорошо известно в случае $n\leqslant2$. Так,
в случае~$n=1$ любая неразложимая система $S$ унитарно эквивалентна
одной из систем $S_0=(\mathbb{C};0)$ и $S_1=(\mathbb{C}; \mathbb{C})$;
а в случае $n=2$ (см., например, \cite{Halmos}) с точностью до
унитарной эквивалентности существуют четыре неразложимые пары
подпространств $S_{00}=(\mathbb{C};0,0)$, $S_{01}=(\mathbb{C};0,
\mathbb{C})$, $S_{10}=(\mathbb{C};\mathbb{C},0)$,
$S_{11}=(\mathbb{C};\mathbb{C},\mathbb{C})$ и семейство неразложимых
систем одномерных подпространств в двумерном пространстве
$S_\varphi=(H;H_1,H_2)$, $\varphi\in(0,\frac\pi2)$, таких, что в
некотором ортонормированном базисе~$\{e_1,e_2\}$ в $H$,
подпространство~$H_1$ порождено вектором $e_1$, а
подпространство~$H_2$ порождено вектором $x=\cos\varphi
e_1+\sin\varphi e_2$.

В случае $n\geqslant3$ задача описания неразложимых систем
подпространств~$S$ с точностью до унитарной эквивалентности является
$*\text{-}$\hspace{0pt}дикой (см.~\cite{KrSam:PAMS_2000, OstSam,
  KrSam:FAP_1980}). $*\text{-}$\hspace{0pt}Дикой является даже задача
об описании троек подпространств $S=(H;H_1,H_2,H_3)$ таких что
$H_2\bot H_3$ (о $*\text{-}$\hspace{0pt}диких задачах
см.~\cite{KrSam:PAMS_2000, OstSam}).

Таким образом, естественно
выделить тот или иной класс систем подпространств и, по возможности,
описать с точностью до унитарной эквивалентности все неразложимые
системы из выбранного класса.

\subsection{Некоторые классы систем подпространств}

В работе \cite{TempLieb} физики H. N. V. Temperley и E. H. Lieb
ввели алгебры
\begin{align*}
  \mathbb{C}\langle
  p_{1},p_{2},\ldots,p_{n}\:|\:
  &p_{j}^{2}=p_{j},\, j=1,2,\ldots,n;\\
  &p_{i}p_{j}p_{i}=\nu p_{i},\, |i-j|=1;\\
  &p_{i}p_{j}=p_{j}p_{i},\, |i-j|\geqslant 2
  \rangle,
\end{align*}
$\nu\in\mathbb{C}$, в связи с изучением моделей статистической
физики. В случае $\nu=\tau_0^2\in(0,1)$ такие алгебры можно
рассматривать как $*\text{-}$\hspace{0pt}алгебры, если определить в
них инволюцию равенствами $p_j^*=p_j$, $1\leqslant j\leqslant n$.  Пусть $\pi$ "--- некоторое
$*\text{-}$\hspace{0pt}представление такой
$*\text{-}$\hspace{0pt}алгебры в гильбертовом пространстве $H$, а
$H_i$ "--- образы ортопроекторов $P_i=\pi(p_i)$. Таким образом мы
получили систему подпространств $S=(H;H_1,\ldots,H_n)$, которая
удовлетворяет следующим условиям:
\begin{enumerate}
\item <<соседние>> пары подпространств \emph{расположены друг
  относительно друга под углом~$\theta_0$}, $\tau_0=\cos \theta_0$,
  т.е.  $P_iP_{i+1}P_i=\tau^2_0P_i$,
  $P_{i+1}P_iP_{i+1}=\tau^2_0P_{i+1}$, $i=1,\ldots, n-1$;
\item остальные пары подпространств \emph{<<коммутируют>>}, т.е.
  $P_iP_j=P_jP_i$.
\end{enumerate}
Рассмотрим граф $A_{n}$ с множеством вершин $\{1,2,\ldots,n\}$ и
множеством ребер $\{i,i+1\}$, $1\leqslant i\leqslant n-1$. Тогда
условия на угол между подпространствами соответствуют парам вершин
$i,j$, соединенных ребром в $A_{n}$, а условия коммутации "--- парам
вершин $i,j$, не соединенных ребром.

Используя это наблюдение, можно определить класс систем
подпространств, связанный с графом $\Gamma$ и функцией $\tau$ на его
ребрах. Пусть $\Gamma$ "--- граф без петель и кратных ребер с
множеством вершин $\{1,2,\ldots,n\}$. Обозначим через $E$ множество
ребер $\Gamma$, а через $\overline{E}$ "--- множество пар вершин
$\{i,j\}$, не соединенных ребром в $\Gamma$. Пусть на ребрах $\Gamma$,
заданы функции
\begin{equation*}
  \theta:E\to(0,\pi/2):\{i,j\}\mapsto \theta_{\{i,j\}}
  \quad\text{и}\quad
  \tau=\cos\theta:E\to(0,1):\{i,j\}\mapsto \tau_{\{i,j\}}.
\end{equation*}
Через $Sys(\Gamma,\tau)$ обозначим множество систем подпространств
$S=(H;H_1,\ldots,H_n)$ таких, что
\begin{enumerate}
\item
  если $\{i,j\}\in E$, то подпространства $H_{i},H_{j}$ расположены
  друг относительно друга под углом $\theta_{\{i,j\}}$, т.е.
  $P_iP_jP_i=\tau_{\{i,j\}}^{2}P_i$ и
  $P_jP_iP_j=\tau_{\{i,j\}}^{2}P_j$;
\item
  если $\{i,j\}\in\overline{E}$, то подпространства $H_i,H_j$
  <<коммутируют>>, т.е.  $P_iP_j=P_jP_i$.
\end{enumerate}
Системы $S\in Sys(\Gamma,\tau)$ можно рассматривать как
$*\text{-}$\hspace{0pt}представления соответствующей
$*\text{-}$\hspace{0pt}алгебры
\begin{align*}
  \mathcal{TL}_{\Gamma,\tau}=\mathbb{C}\langle
  p_{1},p_{2},\ldots,p_{n}\:|\:
  &p_{j}^{2}=p_{j}^{*}=p_{j},\, j=1,2,\ldots,n;\\
  &p_{i}p_{j}p_{i}=\tau_{\{i,j\}}^{2} p_{i},\, \{i,j\}\in E;\\
  &p_{i}p_{j}=p_{j}p_{i},\, \{i,j\}\in\overline{E}
  \rangle.
\end{align*}
Отметим, что если <<забыть>> об инволюции, т.е. в определении
$\mathcal{TL}_{\Gamma,\tau}$ условия $p_{j}^{2}=p_{j}^{*}=p_{j}$
заменить на $p_{j}^{2}=p_{j}$, то определенная таким образом алгебра
будет \emph{проективной алгеброй} (см. \cite{Graham}, раздел 6).

Более узкий класс систем подпространств получится, если для каждой
пары вершин $i,j$, не соединенных ребром в $\Gamma$, условие
коммутации усилить, заменив на условие \emph{ортогональности}
$P_iP_j=P_jP_i=0$. Множество таких систем подпространств "---
<<простых>> систем подпространств "--- обозначим через
$Sys(\Gamma,\tau,\bot)$. Такие системы можно рассматривать как
$*\text{-}$\hspace{0pt}представления соответствующей
$*\text{-}$\hspace{0pt}алгебры $\mathcal{TL}_{\Gamma,\tau,\bot}$
(которая является фактор-алгеброй $*\text{-}$\hspace{0pt}алгебры
$\mathcal{TL}_{\Gamma,\tau}$). Изучению класса систем подпространств
$Sys(\Gamma,\tau,\bot)$ и $*\text{-}$\hspace{0pt}алгебр
$\mathcal{TL}_{\Gamma,\tau,\bot}$ посвящена серия работ
(см. обзор~\cite{SamStr}).

Естественным образом возникает класс систем подпространств, занимающий
<<промежуточное>> положение между $Sys(\Gamma,\tau)$ и
$Sys(\Gamma,\tau,\bot)$. Пусть $E^{c}$ "--- некоторое подмножество
множества $\overline{E}$.  Обозначим через $Sys(\Gamma,E^{c},\tau)$
множество систем подпространств $S=(H;H_1,\ldots,H_n)$ таких, что
\begin{enumerate}
\item если $\{i,j\}\in E$, то подпространства $H_{i},H_{j}$
  расположены друг относительно друга под углом $\theta_{\{i,j\}}$;
\item если $\{i,j\}\in E^{c}$, то подпространства $H_i,H_j$
  <<коммутируют>>;
\item если $\{i,j\}\in \overline{E}\setminus E^{c}$, то подпространства $H_i,H_j$
  ортогональны.
\end{enumerate}

В настоящей работе мы изучаем классы систем
подпространств~$Sys(K_{1,N},E_m^c,\tau)$, где $N\in\mathbb{N}$,
$m\in\mathbb{N}\cup \{0\}$, $2m\leqslant N$, $K_{1,N}$ "--- <<звезда>>
с $N$ лучами, $\tau$ "--- произвольная функция на ребрах $K_{1,N}$, а множество
$E_{m}^c$ состоит из пар вершин $\{2k, 2k+1\}$, $1\leqslant k\leqslant
m$.  Перед тем как более аккуратно сформулировать задачу и основные
результаты, напомним некоторые необходимые определения.

\subsection{Основные определения}
Систему подпространств $S=(H;H_{1},\ldots,H_{n})$ называют
\emph{разложимой}, если существует ортогональное разложение
$H=H'\oplus H''$ в прямую сумму ненулевых подпространств $H',H''$ и
системы подпространств $S'=(H';H_{1}',\ldots,H_{n}')$,
$S''=(H'';H_{1}'',\ldots,H_{n}'')$ такие, что $H_{k}=H_{k}'\oplus
H_{k}''$ для всех $1\leqslant k\leqslant n$. Система подпространств
$S$ называется \emph{неразложимой} если она не является разложимой.
Хорошо известно, что неразложимость системы $S$ равносильна ее
\emph{неприводимости}, т.е. выполнению следующего условия: если
ограниченный линейный оператор $A:H\to H$ удовлетворяет
$AP_{k}=P_{k}A$ при $1\leqslant k\leqslant n$, то $A=\lambda I$ для
некоторого $\lambda\in\mathbb{C}$.

Две системы подпространств $S=(H;H_{1},\ldots,H_{n})$ и
$S'=(H';H_{1}',\ldots,H_{n}')$ подпространств в $H$ и $H'$ называются
\emph{унитарно эквивалентными}, если существует унитарный оператор
$U:H\to H'$ такой, что $H_{k}'=U(H_{k})$ для всех $1\leqslant
k\leqslant n$. Последнее условие равносильно $P_{k}'=UP_{k}U^{*}$,
т.е.  $UP_{k}=P_{k}'U$.

Для системы подпространств $S$, вектор $(\dim H;\dim H_{1},\ldots,\dim
H_{n})$, компонентами которого являются мощности, называют
\emph{обобщенной размерностью} системы подпространств~$S$.  В
дальнейшем, чтобы не усложнять обозначения, если для системы $S$ все
$\dim H_{k}$ равны, то обобщенной размерностью $S$ будем называть
вектор $(\dim H;\dim H_{1})$.

Система подпространств $S$ называется \emph{нулевой}, если $H_{i}=0$
для всех $1\leqslant i\leqslant n$. В противном случае система
называется ненулевой.

\subsection{Постановка задачи и основные результаты}
Пусть $m$, $r$ "--- неотрицательные целые числа, положим
$N=2m+r$. Рассмотрим <<звезду>> с $N$ лучами, т.е. граф~$K_{1,N}$,
множество вершин которого $V = \{1, 2, \ldots, N+1 \}$ занумеровано
таким образом, что вершина $1$ соединена со всеми остальными
вершинами. Таким образом, множество ребер $E$ равно
$\{\{1,k\}\,|\,k\in V\setminus\{1\} \}$. Через $E^{c}_m$ обозначим
множество пар вершин $\{\{2k,2k+1\}\,|\, 1\leqslant k\leqslant m\}$
(на рисунке эти пары вершин соединены пунктиром):
\vspace{10pt}

\begin{center}
  \begin{picture}(200,135)
    \put(20,50){\circle*{3}}
    \put(20,90){\circle*{3}}
    \put(80,70){\circle*{3}}
    \put(140,90){\circle*{3}}
    \put(140,50){\circle*{3}}
    \put(60,130){\circle*{3}}
    \put(100,130){\circle*{3}}

    \put(90,6){\circle*{3}}
    \put(124,18){\circle*{3}}
    \put(44,16){\circle*{3}}

    \qbezier(20,50)(50,60)(80,70)
    \qbezier(20,90)(50,80)(80,70)

    \qbezier(80,70)(70,100)(60,130)
    \qbezier(80,70)(90,100)(100,130)

    \qbezier(80,70)(110,80)(140,90)
    \qbezier(80,70)(110,60)(140,50)

    \qbezier(80,70)(102,44)(124,18)

    \qbezier(80,70)(85,38)(90,6)

    \qbezier(80,70)(62,43)(44,16)

    \put(68,78){1}
    \put(10,48){2}
    \put(10,88){3}
    \put(50,130){4}
    \put(105,130){5}
    \put(145,88){$2m$}
    \put(145,48){$2m+1$}

    \put(128,14){$2m+2$}
    \put(83,-6){$2m+3$}
    \put(20,6){$2m+1+r$}

    \put(100,90){$\ddots$}

    \put(65,25){$\ddots$}

    \multiput(20,50)(0,4){10}{\circle*{1}}
    \multiput(60,130)(4,0){10}{\circle*{1}}
    \multiput(140,50)(0,4){10}{\circle*{1}}

  \end{picture}
\end{center}

Пусть каждому ребру $\{1,k\}$ сопоставлен угол
$\theta_{\{1,k\}}\in(0,\pi/2)$, т.е. задана функция
$\theta:E\to(0,\pi/2)$. Определим функцию $\tau=\cos\theta:E\to(0,1)$,
т.е.  $\tau_{\{1,k\}}=\cos\theta_{\{1,k\}}\in(0,1)$.

Введенные граф~$K_{1,N}$, множество~$E_m^c$ и функция $\tau$ позволяют
<<наглядно>> сформулировать условия, которым удовлетворяют системы
подпространств~$S=(H;H_{1},\ldots,H_{n})$, $n=N+1$, рассматриваемые в
этой статье.

\textbf{(Ang): Условия на углы} (соответствующие парам вершин,
соединенных ребром).  Для каждого $k=2,3,\ldots,N+1$ подпространства
$H_{1}$ и $H_{k}$ расположены друг относительно друга под углом
$\theta_{\{1,k\}}$, т.е.  $P_{1}P_{k}P_{1}=\tau_{\{1,k\}}^{2}P_{1}$ и
$P_{k}P_{1}P_{k}=\tau_{\{1,k\}}^{2}P_{k}$.

\textbf{(Com): Условия коммутации} (соответствующие парам вершин,
соединенных пунктиром).  Для каждого $k=1,2,\ldots,m$ ортопроекторы
$P_{2k}$ и $P_{2k+1}$ коммутируют, т.е.
$P_{2k}P_{2k+1}=P_{2k+1}P_{2k}$.

\textbf{(Ort): Условия ортогональности} (соответствующие парам вершин,
не соединенных ребром или пунктиром).  Если пара различных вершины
$i,j$ не соединена ребром, и эта пара не принадлежит множеству
$E^c_m$, то соответствующие подпространства $H_{i}$ и $H_{j}$
ортогональны, т.е.  $P_{i}P_{j}=0$.

Отметим, что в случае $m=0$ изучаемая система подпространств является
<<простой>> системой подпространств, связанной с графом $K_{1,N}$ и функцией $\tau$
на его ребрах.

Системы подпространств, удовлетворяющие приведенным выше условиям,
можно рассматривать как $\ast\text{-}$\hspace{0pt}представления в
гильбертовом пространстве
$\ast\text{-}$\hspace{0pt}\nolinebreakалгебры
$\mathcal{TL}(K_{1,N},E^{c}_m,\tau)$, определенной образующими и
соотношениями:
\begin{align*}
  \mathcal{TL}(K_{1,N},E^c_m,\tau)=
  \mathbb{C}\langle
  p_{1},p_{2},\ldots,p_{N+1}\:|\:
  &p_{j}^{2}=p_{j}^{*}=p_{j},\; j\in V\\
  &p_{i}p_{j}p_{i}=\tau_{\{i,j\}}^{2}p_{i},\; \{i,j\}\in E\\
  &p_{i}p_{j}=p_{j}p_{i},\; \{i,j\}\in E^c_m,\\
  &p_{i}p_{j}=0,\; \{i,j\}\not\in E\cup E^c_m
  \rangle.
\end{align*}
Взаимно однозначное соответствие между системами
подпространств $S$, удовлетворяющими условиям (Ang), (Com), (Ort), и
$\ast\text{-}$\hspace{0pt}представлениями $\pi$
$\ast\text{-}$\hspace{0pt}алгебры $\mathcal{TL}(K_{1,N},E^c_m,\tau)$ в
гильбертовом пространстве $H$ задается равенством
$H_{k}=\mathrm{Im}\,\pi(p_{k})$, $k\in V$.

В обозначении $\ast\text{-}$\hspace{0pt}алгебры
$\mathcal{TL}(K_{1,N},E^c_m,\tau)$ буквы $\mathcal{TL}$ выбраны в
честь физиков H. N. V. Temperley и E. H. Lieb'а.

Основными результатами нашей работы являются:
\begin{enumerate}
\item
  описание (с точностью до унитарной эквивалентности) всех систем
  подпространств $S$, удовлетворяющих (Ang), (Com), (Ort)
  (см. подраздел~\ref{SS:all});
\item
  описание (с точностью до унитарной эквивалентности) всех
  неприводимых систем подпространств $S$, удовлетворяющих (Ang),
  (Com), (Ort) (см. теорему~\ref{T:main} и подраздел~\ref{SS:irred}).
\end{enumerate}

Отметим, что для $m\geqslant3$ при вариациях параметров
$\tau_{\{1,k\}}$ возникают три возможные ситуации: существует
конечное число унитарно неэквивалентных неприводимых систем
подпространств (\emph{конечная} задача); унитарно неэквивалентных
неприводимых систем подпространств бесконечное число, но их все еще
можно описать (\emph{ручная} задача); задача описания всех систем
подпространств с точностью до унитарной эквивалентности является
<<безнадежной>> в определенном смысле (\emph{дикая} задача).

В разделе~\ref{S:construction} приводится
$G\text{-}$\hspace{0pt}конструкция, которая является основным
инструментом, используемым для описания систем подпространств. Сама
$G\text{-}$\hspace{0pt}конструкция и результаты
раздела~\ref{S:construction} за исключением результатов
подраздела~\ref{sec:irreducibility} сформулированы для произвольных
систем подпространств. Утверждения подраздела~\ref{sec:irreducibility}
могут быть усилены, но с одной стороны это привело бы к усложнению
доказательств, а с другой стороны и в таком варианте они могут быть
использованы для изучения класса систем подпространств намного более
широкого, чем тот, который изучается в настоящей работе.

Авторы надеются, что $G\text{-}$\hspace{0pt}конструкция позволит в
дальнейших исследованиях получить описания других классов систем
подпространств. Например, одним из интересных для изучения классов
является класс систем подпространств, который задается графом
$K_{1,N}$, функцией $\tau$ на ребрах и множеством пар вершин
(<<пунктирных>> ребер) $E^c$, более сложным, чем $E^c_m$.

\subsection{Обозначения}  В данной работе мы рассматриваем комплексные
гильбертовы пространства, которые обозначаем буквами $H$, $M$,
$K$. Отметим, что мы не накладываем дополнительных условий на
размерность гильбертова пространства. Чтобы не усложнять обозначения,
скалярное произведение, как правило, обозначается
$\langle\cdot,\cdot\rangle$. В различных гильбертовых пространствах
скалярное произведение может обозначаться одинаково, если это не
приводит к недоразумению.

\section{$G\text{-}$\hspace{0pt}конструкция}\label{S:construction}

\subsection{$G\text{-}$\hspace{0pt}конструкция системы подпространств гильбертова пространства.}

Пусть $H_{0,k}$, $1\leqslant k\leqslant n$, "--- набор ненулевых
гильбертовых пространств. Определим гильбертово пространство
$\widetilde{H}=H_{0,1}\oplus\ldots\oplus H_{0,n}$ и будем обозначать
через $\langle\cdot,\cdot\rangle_{0}$ скалярное произведение в
нем. Пусть $\Gamma_{k}:H_{0,k}\to\widetilde{H}$ "--- естественное
вложение пространства $H_{0,k}$ в пространство~$\widetilde{H}$, то
есть $\Gamma_{k}x=(0,\ldots,0,x,0,\ldots,0)$, где $x$ стоит на
$k\text{-}$\hspace{0pt}том месте. Тогда оператор
$\Gamma_{k}^{*}:\widetilde{H}\to H_{0,k}$ является оператором
выделения $k\text{-}$\hspace{0pt}ой координаты:
$\Gamma_{k}^{*}(x_{1},\ldots,x_{n})=x_{k}$.  Определим
\begin{equation*}
  \widetilde{H}_{k}=\mathrm{Im}\,\Gamma_{k},\quad 1\leqslant k\leqslant n.
\end{equation*}

Пусть $B:\widetilde{H}\to\widetilde{H}$ "--- ограниченный
неотрицательный самосопряженный оператор, причем для его блочного
разложения $B=(B_{i,j}:H_{0,j}\to H_{0,i},\,1\leqslant i,j\leqslant
n)$ выполнено
\begin{equation}\label{eq:ident}
  B_{k,k}=I_{H_{0,k}},\quad 1\leqslant k\leqslant n.
\end{equation}

Определим $\widetilde{H}_{0}=\mathrm{Ker}\,B$.  Используя оператор
$B$, зададим скалярное произведение в линейном пространстве
$\widetilde{H}/\widetilde{H}_{0}$ с помощью равенства:
\begin{equation*}
  \langle x+\widetilde{H}_{0},y+\widetilde{H}_{0}\rangle=
  \langle Bx,y\rangle_{0}, \quad x,y\in\widetilde{H}.
\end{equation*}
Очевидно, что это определение корректно, так как не зависит от выбора
представителей классов эквивалентности.  Пусть $H$ "--- пополнение
пространства $\widetilde{H}/\widetilde{H}_{0}$ относительно введенного
скалярного произведения.

Определим ограниченный линейный оператор $\rho:\widetilde{H}\to H$
равенством
\begin{equation*}
  \rho(x)=x+\widetilde{H}_{0}.
\end{equation*}
Ясно, что $\mathrm{Im}\, \rho=\widetilde{H}/\widetilde{H}_{0}$.
Положим $H_{k}=\rho(\widetilde{H}_{k})=\{z+\widetilde{H}_{0}\,\mid\,
z\in\widetilde{H}_{k}\}$, $1\leqslant k\leqslant n$. Так как для
произвольного $z\in\widetilde{H}_{k}$
\begin{equation*}
  \|z+\widetilde{H}_{0}\|=\sqrt{\langle Bz,z\rangle_{0}}=\|z\|_{0},
\end{equation*}
то $H_{k}$ является подпространством пространства~$H$.  Кроме того,
\begin{equation*}
  \rho_{k}=\rho \upharpoonright_{\widetilde{H}_{k}}:\widetilde{H}_{k}\to H_{k},\quad 1\leqslant k\leqslant n,
\end{equation*}
является унитарным оператором. Далее, поскольку
$H_{1}+\ldots+H_{n}=\rho(\widetilde{H})=\widetilde{H}/\widetilde{H}_{0}$,
то $H_{1}+\ldots+H_{n}$ плотно в $H$.

Систему подпространств $(H;H_{1},\ldots,H_{n})$, полученную в
результате применения приведенной выше конструкции, будем
обозначать~$\mathcal{G}(H_{0,1},\ldots,H_{0,n};B)$, а саму конструкцию будем называть
$G\text{-}$\hspace{0pt}конструкцией.

\subsection{Произвольная система подпространств как результат $G\text{-}$\hspace{0pt}конструкции}

\begin{definition}
  Пусть $K$ "--- гильбертово пространство, $S=(K;K_{1},\ldots,K_{n})$
  "--- система его подпространств.  Обозначим $Q_{i}$ ортопроектор на
  $K_{i}$, $1\leqslant i\leqslant n$.  Оператор
  $G(S):\oplus_{i=1}^{n}K_{i}\to\oplus_{i=1}^{n}K_{i}$, заданный своим
  блочным разложением $G_{i,j}=Q_{i}\upharpoonright_{K_{j}}:K_{j}\to
  K_{i}$, $1\leqslant i,j\leqslant n$, называют оператором Грама
  системы подпространств $S$.
\end{definition}

\begin{proposition}\label{P:canonicalconstr}
 Пусть $S=(K;K_{1},\ldots,K_{n})$ "--- система ненулевых подпространств
 гильбертова пространства
 $K$, причем
 $K_{1}+\ldots+K_{n}$ плотно в $K$. Тогда система подпространств
 $\mathcal{G}(K_{1},\ldots,K_{n};G(S))$ унитарно эквивалентна системе
 $S$.
\end{proposition}

\begin{proof}
  Из определения~$G=G(S)$ видно, что $G_{i,i}=I_{K_{i}}$ и
  $G_{i,j}^{*}=G_{j,i}$, $1\leqslant i,j\leqslant n$. Покажем, что
  оператор~$G$ неотрицателен. Скалярное
  произведение в $K$ обозначим $\langle\cdot,\cdot\rangle$. Для
  произвольных $x_{j}\in K_{j}$, $y_{i}\in K_{i}$ имеем
  \begin{equation*}
    \langle G_{i,j}x_{j},y_{i}\rangle=\langle Q_{i}x_{j},y_{i}\rangle=\langle x_{j},y_{i}\rangle.
  \end{equation*}
  Тогда для произвольных $x=(x_{1},\ldots,x_{n})\in\widetilde{H}$ и
  $y=(y_{1},\ldots,y_{n})\in\widetilde{H}$ верно равенство
  \begin{equation*}
    \langle Gx,y\rangle_{0}=\langle\sum_{k=1}^{n}x_{k},\sum_{k=1}^{n}y_{k}\rangle.
  \end{equation*}
  Отсюда следует, что~$G$ неотрицателен, а его ядро $\mathrm{Ker}\,
  G$ состоит из векторов~$x\in \widetilde{H}$ таких, что
  $x_1+\dots+x_n=0$.

  Определим оператор $U:\rho(\widetilde{H})\to\sum_{i=1}^{n}K_{i}$
  равенством
  \begin{equation*}
    U(x+\widetilde{H}_{0})=\sum_{k=1}^{n}x_{k},
    \quad x=(x_1,\dots,x_n)\in\widetilde{H},
    \quad x_{i}\in K_{i},\quad 1\leqslant i\leqslant n.
  \end{equation*}
  Это определение корректно, поскольку $\widetilde{H}_0=\mathrm{Ker}\,G$.

  Так как
  \begin{equation*}
    \langle x+\widetilde{H}_0, y+\widetilde{H}_0\rangle =
    \langle Gx, y\rangle_0 = \langle\sum_{i=1}^{n}x_{i},\sum_{i=1}^{n}y_{i}\rangle,
  \end{equation*}
  и сумма $K_1+\dots+K_n$ плотна в $K$, то $U$ "--- линейный оператор,
  сохраняющий скалярное произведение, образ которого плотен в $K$.
  Следовательно, оператор~$U$ единственным образом продолжается по
  непрерывности до унитарного оператора $\overline{U}:H\to K$.
  Поскольку $\overline{U}(H_{i})=K_{i}$ для всех $1\leqslant
  i\leqslant n$, то утверждение доказано.
\end{proof}

\subsection{Критерий унитарной эквивалентности систем подпространств полученных в результате $G\text{-}$\hspace{0pt}конструкции}

\begin{proposition}\label{P:unitequiv}
  Системы подпространств
  \begin{equation*}
    \mathcal{G}(H_{0,1},\ldots,H_{0,n};B)\quad \text{ и }\quad
    \mathcal{G}(H'_{0,1},\ldots,H'_{0,n};B')
  \end{equation*}
  унитарно эквивалентны тогда и только тогда, когда существует набор
  унитарных операторов $U_{0,k}:H'_{0,k}\to H_{0,k}$, $1\leqslant
  k\leqslant n$, такой, что для произвольных $i,j$ выполнено равенство
  \begin{equation}\label{eq:unitary}
    B'_{i,j}=U_{0,i}^{*}B_{i,j}U_{0,j}.
  \end{equation}
  Если ввести унитарный оператор
  \begin{equation*}
    \widetilde{U}=\mathrm{diag}\,(U_{0,1},\ldots,U_{0,n}):\widetilde{H}'\to \widetilde{H},
  \end{equation*}
  то систему равенств~\eqref{eq:unitary} можно записать в виде
  $B'=\widetilde{U}^{*}B\widetilde{U}$.
\end{proposition}

\begin{proof}
  \textbf{1.}
  Пусть системы подпространств
  \begin{equation*}
    (H,H_{1},\ldots,H_{n})=\mathcal{G}(H_{0,1},\ldots,H_{0,n};B)
    \quad\text{ и }\quad
    (H',H'_{1},\ldots,H'_{n})=\mathcal{G}(H'_{0,1},\ldots,H'_{0,n};B')
  \end{equation*}
  унитарно эквивалентны. Тогда существует унитарный оператор $U:H'\to
  H$, такой, что $U(H_{k}^{\prime})=H_{k},\,1\leq k\leq n$.
  Определим
  унитарный оператор $U_k=U\upharpoonright_{H'_{k}}:H'_{k}\to
  H_{k}$. Определим унитарный оператор $U_{0,k}:H'_{0,k}\to H_{0,k}$
  равенством
  \begin{equation*}
    U_{0,k}=\Gamma_{k}^{*}\rho_{k}^{-1}U_{k}\rho_{k}^{\prime}\Gamma_{k}^{\prime}
  \end{equation*}
  Тогда для каждого $1\leqslant k\leqslant n$ диаграмма
  \begin{equation}\label{eq:diag}
    \begin{CD}
      H'_{0,k} @>U_{0,k}>> H_{0,k}\\
      @V\Gamma'_{k}VV @V\Gamma_{k}VV \\
      \widetilde{H}_{k}^{\prime} @>>> \widetilde{H}_{k}\\
      @V\rho'_{k}VV @V\rho_{k}VV\\
      H'_{k}@>U_k>> H_{k}
    \end{CD}
  \end{equation}
  коммутативна.

  Покажем, что выполнено равенство
  $B_{i,j}^{\prime}=U_{0,i}^{*}B_{i,j}U_{0,j}$, $1\leqslant
  i,j\leqslant n$. В силу коммутативности диаграммы~\eqref{eq:diag},
  для любого $x\in H'_{0,k}$ верно равенство
  \begin{equation*}
    \Gamma_k U_{0,k}x+\widetilde{H}_0=
    U_{k}(\Gamma'_kx+\widetilde{H}'_0)=U(\Gamma'_{k}x+\widetilde{H}'_0).
  \end{equation*}
  Пусть $x\in H'_{0,j}$, $y\in H'_{0,i}$, тогда верны равенства
  \begin{align*}
    \langle B'_{i,j}x,y\rangle&
    =\langle B'\Gamma'_j x,\Gamma'_i y\rangle_0
    =\langle \Gamma'_j x+\widetilde{H}'_0,\Gamma'_i y+\widetilde{H}'_0\rangle\\
    \langle B_{i,j}U_{0,j}x,U_{0,i}y\rangle&
    =\langle B\Gamma_j U_{0,j}x,\Gamma_i U_{0,i}y\rangle_0
    =\langle \Gamma_j U_{0,j}x+\widetilde{H}_0,\Gamma_i U_{0,i}y+\widetilde{H}_0\rangle,
  \end{align*}
  следовательно $\langle B'_{i,j}x,y\rangle=\langle
  U_{0,i}^*B_{i,j}U_{0,j}x,y\rangle$, то есть $B'_{i,j}=U_{0,i}^*B_{i,j}U_{0,j}$.

  \textbf{2.} Наоборот, пусть существует набор унитарных операторов
  $U_{0,k}:H'_{0,k}\to H_{0,k}$, $1\leqslant k\leqslant n$, такой, что
  $B'=\widetilde{U}^{*}B\widetilde{U}$. Определим оператор
  $U:\rho'(\widetilde{H}')\to \rho(\widetilde{H})$ равенством
  \begin{equation*}
   U(x+\widetilde{H}'_{0})=\widetilde{U}x+\widetilde{H}_{0},\quad x\in\widetilde{H}'.
  \end{equation*}
  Это определение корректно, так как
  $\widetilde{U}(\mathrm{Ker}\,B')=\mathrm{Ker}\,B$.

  Для произвольных
  $x,y\in\widetilde{H}'$ имеем:
  \begin{equation*}
  \langle x+\widetilde{H}'_0,y+\widetilde{H}'_0\rangle=
  \langle B'x,y\rangle_{0}=
  \langle \widetilde{U}^{*}B\widetilde{U}x,y\rangle_{0}=
  \langle B\widetilde{U}x,\widetilde{U}y\rangle_{0}=
  \langle \widetilde{U}x+\widetilde{H}_{0},\widetilde{U}y+\widetilde{H}_{0}\rangle.
  \end{equation*}
  Таким образом, $U$ "--- линейный оператор, сохраняющий скалярное
  произведение, образ которого
  $U(\rho'(\widetilde{H}'))=\rho(\widetilde{H})$ плотен в $H$.
  Поэтому существует единственное продолжение по непрерывности $U$ до
  унитарного оператора $\overline{U}:H'\to H$.  Ясно, что для всех
  $1\leqslant k\leqslant n$, $\overline{U}(H'_{k})=H_{k}$, таким
  образом утверждение доказано.
\end{proof}

\subsection{Связь свойств системы подпространств $\mathcal{G}(H_{0,1},\ldots,H_{0,n};B)$
со свойствами оператора $B$}\label{SS:properties}

Пусть система подпространств
$S=(H;H_{1},\ldots,H_{n})=\mathcal{G}(H_{0,1},\ldots,H_{0,n};B)$.
Обозначим $P_{i}$ ортопроектор на $H_{i}$, $1\leqslant i\leqslant n$.
Пусть $G=G(S)$ "--- оператор Грама системы $S$,
$G_{i,j}=P_{i}\upharpoonright_{H_{j}}:H_{j}\to H_{i}$, $1\leqslant
i,j\leqslant n$.

\begin{proposition}\label{P:connection}
  Существует набор унитарных операторов $U_{0,k}:H_{k}\to H_{0,k}$,
  $1\leqslant k\leqslant n$, такой, что
  $G_{i,j}=U_{0,i}^{*}B_{i,j}U_{0,j}$ $1\leqslant i,j\leqslant n$.
\end{proposition}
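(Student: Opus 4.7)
The plan is to obtain Proposition~\ref{P:connection} as an immediate combination of the two preceding propositions, essentially by applying them in sequence.

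First I would check that Proposition~\ref{P:canonicalconstr} is applicable to the system $S=(H;H_1,\ldots,H_n)$ itself. By the very definition of the $G$-construction, $H_1+\ldots+H_n=\rho(\widetilde{H})=\widetilde{H}/\widetilde{H}_0$ is dense in $H$, so the hypothesis is met. Applying Proposition~\ref{P:canonicalconstr} yields that the $G$-construction
\begin{equation*}
  \mathcal{G}(H_1,\ldots,H_n;G(S))
\end{equation*}
is unitary equivalent to $S$.

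Next, since we are given $S=\mathcal{G}(H_{0,1},\ldots,H_{0,n};B)$, the two $G$-constructions
\begin{equation*}
  \mathcal{G}(H_1,\ldots,H_n;G(S))
  \quad\text{and}\quad
  \mathcal{G}(H_{0,1},\ldots,H_{0,n};B)
\end{equation*}
are both unitary equivalent to $S$, hence unitary equivalent to each other. I would now invoke Proposition~\ref{P:unitequiv}, identifying the ``primed'' data with $(H_1,\ldots,H_n;\,G(S))$ and the ``unprimed'' data with $(H_{0,1},\ldots,H_{0,n};B)$. The proposition then furnishes unitary operators $U_{0,k}:H_k\to H_{0,k}$, $1\leqslant k\leqslant n$, satisfying $G_{i,j}=U_{0,i}^{*}B_{i,j}U_{0,j}$ for every pair $1\leqslant i,j\leqslant n$, which is exactly the desired conclusion.

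There is no real obstacle; the only point requiring care is bookkeeping. One has to match the directions of the unitaries in Proposition~\ref{P:unitequiv} (which go from the primed to the unprimed summands) to the present statement, and to verify that the ``primed'' block operator coming from the system $\mathcal{G}(H_1,\ldots,H_n;G(S))$ is indeed $G(S)$, so that the transformation formula reads $G_{i,j}=U_{0,i}^{*}B_{i,j}U_{0,j}$ rather than its inverse. Once this identification is made, the statement follows without further computation.
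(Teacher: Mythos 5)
Your argument is correct and coincides with the paper's own proof: the paper likewise applies Proposition~\ref{P:canonicalconstr} to identify $S$ with $\mathcal{G}(H_{1},\ldots,H_{n};G(S))$ and then invokes Proposition~\ref{P:unitequiv}. Your extra care about the density hypothesis and the direction of the unitaries $U_{0,k}:H_{k}\to H_{0,k}$ is exactly the right bookkeeping, which the paper leaves implicit.
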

\begin{proof}
  Из утверждения \ref{P:canonicalconstr} следует, что $S$ унитарно
  эквивалентна системе подпространств
  $\mathcal{G}(H_{1},\ldots,H_{n};G)$.  Теперь из утверждения
  \ref{P:unitequiv} получаем нужное.
\end{proof}

Утверждение~\ref{P:connection} позволяет связать свойства системы
подпространств $S$ со свойствами оператора~$B$.  Во всех следующих
примерах через $\alpha,\beta$ мы обозначаем пару различных индексов из
множества~$\{1,2,\ldots,n\}$.

\begin{example} \emph{Условие ортогональности.}  Подпространства
  $H_{\alpha}$ и $H_{\beta}$ ортогональны тогда и только тогда, когда
  $P_{\alpha}P_{\beta}=0$, что равносильно
  условию~$G_{\alpha,\beta}=0,$ что, в свою очередь,
  равносильно~$B_{\alpha,\beta}=0.$
\end{example}

\begin{example} \emph{Условие на угол между подпространствами.}
Пусть $\theta_{0}\in[0,\pi/2)$. Определим
  $\tau_{0}=\cos\theta_{0}$. Подпространства $H_{\alpha},H_{\beta}$
  расположены друг относительно друга под углом $\theta_{0}$ тогда и
  только тогда, когда
\begin{equation*}
  P_{\alpha}P_{\beta}P_{\alpha}=\tau_{0}^{2}P_{\alpha},\,
  P_{\beta}P_{\alpha}P_{\beta}=\tau_{0}^{2}P_{\beta},
\end{equation*}
т.е.
\begin{equation*}
  G_{\alpha,\beta}G_{\beta,\alpha}=\tau_{0}^{2}I_{H_{\alpha}},\,
  G_{\beta,\alpha}G_{\alpha,\beta}=\tau_{0}^{2}I_{H_{\beta}}.
\end{equation*}
Поскольку $G_{i,j}=U_{0,i}^{*}B_{i,j}U_{0,j},\,1\leqslant i,j\leqslant
n$ для некоторого набора унитарных операторов $U_{0,k}:H_{k}\to
H_{0,k},\,1\leqslant k\leqslant n$, то последнее условие равносильно
\begin{equation*}
  B_{\alpha,\beta}B_{\beta,\alpha}=\tau_{0}^{2}I_{H_{0,\alpha}},\,
  B_{\beta,\alpha}B_{\alpha,\beta}=\tau_{0}^{2}I_{H_{0,\beta}}.
\end{equation*}
Последние два равенства равносильны унитарности оператора
$B_{\alpha,\beta}/\tau_{0}$.
\end{example}

\begin{example} \emph{Условие коммутации.} Условие
$P_{\alpha}P_{\beta}=P_{\beta}P_{\alpha}$
равносильно условию
\begin{equation*}
  P_{\alpha}P_{\beta}=P_{\alpha}P_{\beta}P_{\alpha},
\end{equation*}
что, в силу плотности $H_{1}+\ldots+H_{n}$ в $H$, равносильно
\begin{equation*}
  P_{\alpha}P_{\beta}P_i\upharpoonright_{H_{i}}
  =P_{\alpha}P_{\beta}P_{\alpha}P_i\upharpoonright_{H_{i}},\quad
  1\leqslant i\leqslant n.
\end{equation*}
Это условие может быть переписано в виде
$G_{\alpha,\beta}G_{\beta,i}=G_{\alpha,\beta}G_{\beta,\alpha}G_{\alpha,i}$,
$1\leqslant i\leqslant n$, что, в свою очередь, равносильно условию
\begin{equation*}
B_{\alpha,\beta}B_{\beta,i}=B_{\alpha,\beta}B_{\beta,\alpha}B_{\alpha,i},\quad
1\leqslant i\leqslant n.
\end{equation*}
Заметим, что последнее равенство выполнено автоматически при
$i=\alpha$, а при~$i=\beta$ из этого равенства следует, что
$B_{\alpha,\beta}$ является частичной изометрией.
\end{example}

\subsection {Неприводимость системы подпространств $\mathcal{G}(H_{0,1},\ldots,H_{0,n};B)$}\label{sec:irreducibility}

Пусть система подпространств
$S=(H;H_{1},\ldots,H_{n})=\mathcal{G}(H_{0,1},\ldots,H_{0,n};
B)$. Обозначим $P_{k}$ ортопроектор на $H_{k}$, $1\leqslant k\leqslant
n$.

\subsubsection{Спуск оператора}

Пусть
$C:H\to H$ "--- линейный непрерывный оператор, коммутирующий со всеми
$P_{k},\,1\leqslant k\leqslant n$, т.е. для всех $1\leqslant k\leqslant n$
подпространства $H_{k}$ и $H_{k}^{\bot}$ инвариантны относительно $C$.
Для
$1\leqslant k\leqslant n$ определим оператор
$C_k=C\upharpoonright_{H_{k}}:H_{k}\to H_{k}$, тогда
$C_{k}^{*}=C^{*}\upharpoonright_{H_{k}}:H_{k}\to H_{k}$.
Определим оператор
$C_{0,k}:H_{0,k}\to H_{0,k}$ равенством
$C_{0,k}=\Gamma_{k}^{*}\rho_{k}^{-1}C_{k}\rho_{k}\Gamma_{k}$.
Набор операторов
$C_{0,k}$, $1\leqslant k\leqslant n$, будем называть спуском оператора
$C$. Из определения
$C_{0,k}$ следует, что для всех
$1\leqslant k\leqslant n$ диаграмма
\begin{equation*}
\begin{CD}
H_{0,k} @>C_{0,k}>> H_{0,k}\\
@V\Gamma_{k}VV @V\Gamma_{k}VV \\
\widetilde{H}_{k} @>>> \widetilde{H}_{k}\\
@V\rho_{k}VV @V\rho_{k}VV\\
H_{k}@>C_{k}>> H_{k}
\end{CD}
\end{equation*}
коммутативна.

\begin{proposition}
  Для всех $1\leqslant i,j\leqslant n$ справедливо равенство
  \begin{equation}\label{eq:spuskcond}
    B_{i,j}C_{0,j}=C_{0,i}B_{i,j}.
  \end{equation}
\end{proposition}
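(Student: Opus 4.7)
The plan is to rewrite both $B_{i,j}$ and $C_{0,k}$ in terms of the isometries $V_k:=\rho_k\Gamma_k\colon H_{0,k}\to H$, thereby reducing \eqref{eq:spuskcond} to the commutation of $C$ with the projections $P_k$.

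First I would verify that $V_k$ is an isometry with range $H_k$: the map $\Gamma_k$ is the natural isomorphism $H_{0,k}\to\widetilde{H}_k$, and $\rho_k$ is a Hilbert-space isomorphism $\widetilde{H}_k\to H_k$, since by construction $\|z+\widetilde{H}_0\|=\|z\|_0$ for every $z\in\widetilde{H}_k$. Consequently
\begin{equation*}
V_k^*V_k=I_{H_{0,k}},\qquad V_kV_k^*=P_k.
\end{equation*}

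Next I would derive the clean formula $B_{i,j}=V_i^*V_j$ by comparing bilinear forms: for $x\in H_{0,j}$ and $y\in H_{0,i}$,
\begin{equation*}
\langle B_{i,j}x,y\rangle_0 = \langle B\Gamma_jx,\Gamma_iy\rangle_0 = \langle \rho\Gamma_jx,\rho\Gamma_iy\rangle = \langle V_jx,V_iy\rangle_H = \langle V_i^*V_jx,y\rangle_0.
\end{equation*}
The definition of $C_{0,k}$ immediately gives $C_{0,k}=V_k^*CV_k$, and the assumption that $H_k$ and $H_k^{\bot}$ are both $C$-invariant is exactly $CP_k=P_kC$ for every $k$.

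After these identifications the identity becomes a short manipulation:
\begin{align*}
B_{i,j}C_{0,j} &= V_i^*V_j\,V_j^*CV_j = V_i^*P_jCV_j = V_i^*CP_jV_j = V_i^*CV_j,\\
C_{0,i}B_{i,j} &= V_i^*CV_i\,V_i^*V_j = V_i^*CP_iV_j = V_i^*P_iCV_j = V_i^*CV_j,
\end{align*}
using $P_jV_j=V_j$, $V_i^*P_i=V_i^*$, and $CP_k=P_kC$; both sides reduce to $V_i^*CV_j$.

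I do not expect a serious obstacle: the entire argument lives in the bookkeeping needed to recognize $B_{i,j}$ as $V_i^*V_j$ and $C_{0,k}$ as $V_k^*CV_k$, after which the invariance hypothesis on $C$ does all the work. The only point deserving a moment's attention is the factorization $V_kV_k^*=P_k$, which is what links the operator $B$ on $\widetilde{H}$ to the geometric projections on $H$.
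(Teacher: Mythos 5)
Your proof is correct and rests on the same two facts as the paper's: the identification $\langle B_{i,j}x,y\rangle_0=\langle\rho\Gamma_jx,\rho\Gamma_iy\rangle$ (which the paper carries out as an inline chain of inner products and you package as $B_{i,j}=V_i^*V_j$ with $V_k=\rho_k\Gamma_k$, $V_kV_k^*=P_k$) and the commutation $CP_k=P_kC$. The paper moves $C$ across the form via $C^*$ and the descent of $C_i^*$, while you substitute $P_k$ for $V_kV_k^*$ and commute; this is only a cosmetic repackaging of the same argument.
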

\begin{proof}
  Рассмотрим произвольные $x\in H_{0,j},\,y\in H_{0,i}$.
  Тогда
\begin{equation*}
\rho \Gamma_{j}C_{0,j}x=C_{j}\rho\Gamma_{j}x=C\rho\Gamma_{j}x.
\end{equation*}
Поскольку  $C_{0,i}^{*}=\Gamma_{i}^{*}\rho_{i}^{-1}C_{i}^{*}\rho_{i}\Gamma_{i}$, то
\begin{equation*}
\rho\Gamma_{i}C_{0,i}^{*}y=C_{i}^{*}\rho\Gamma_{i}y=C^{*}\rho\Gamma_{i}y.
\end{equation*}
Теперь имеем:
  \begin{align*}
    \langle B_{i,j}C_{0,j}x,y\rangle=
    \langle B\Gamma_{j}C_{0,j}x,\Gamma_{i}y\rangle_{0}=
    \langle \rho\Gamma_{j}C_{0,j}x,\rho\Gamma_{i}y\rangle=
    \langle C\rho\Gamma_{j}x,\rho\Gamma_{i}y\rangle=\\
    =\langle \rho\Gamma_{j}x,C^{*}\rho\Gamma_{i}y\rangle=
    \langle \rho\Gamma_{j}x,\rho\Gamma_{i}C_{0,i}^{*}y\rangle=
    \langle B_{i,j}x,C_{0,i}^{*}y\rangle=
    \langle C_{0,i}B_{i,j}x,y\rangle.
  \end{align*}
  Таким образом, равенство~\eqref{eq:spuskcond} доказано.
\end{proof}

Для последовательности индексов
$l=(i(1),i(2),\ldots,i(k))$ определим оператор
\begin{equation*}
B_{l}=B_{i(1),i(2)}\ldots B_{i(k-1),i(k)}:H_{0,i(k)}\to H_{0,i(1)}.
\end{equation*}

\begin{corollary} Для произвольной последовательности индексов
  $l=(i,\ldots,j)$ выполнено равенство
  \begin{equation*}
    C_{0,i}B_{l}=B_{l}C_{0,j}.
  \end{equation*}
\end{corollary}

\subsubsection{Подъем набора операторов}

Пусть
$C_{0,k}:H_{0,k}\to H_{0,k}$, $1\leqslant k\leqslant n$, "--- набор унитарных операторов, причем для произвольных
$i,j$ выполнено
\eqref{eq:spuskcond}. Определим унитарный оператор
$\widetilde{C}=\mathrm{diag}\,(C_{0,1},\ldots,C_{0,n}):\widetilde{H}\to\widetilde{H}$.
Из равенств
\eqref{eq:spuskcond} следует, что
$B=\widetilde{C}^{*}B\widetilde{C}$.

Определим оператор
$C:\rho(\widetilde{H})\to\rho(\widetilde{H})$ равенством
\begin{equation*}
C(x+\widetilde{H}_{0})=\widetilde{C}x+\widetilde{H}_{0},\quad x\in\widetilde{H}.
\end{equation*}
Это определение корректно, так как
$\widetilde{C}(\mathrm{Ker}\,B)=\mathrm{Ker}\,B$.

Для произвольных
$x,y\in\widetilde{H}$ имеем:
\begin{equation*}
\langle x+\widetilde{H}_{0},y+\widetilde{H}_{0}\rangle=
\langle Bx,y\rangle_{0}=
\langle \widetilde{C}^{*}B\widetilde{C}x,y\rangle_{0}=
\langle B\widetilde{C}x,\widetilde{C}y\rangle_{0}=
\langle \widetilde{C}x+\widetilde{H}_{0},\widetilde{C}y+\widetilde{H}_{0}\rangle.
\end{equation*}
Поскольку $\rho(\widetilde{H})$ плотно в $H$, то $C$ "--- линейный
оператор, сохраняющий скалярное произведение, образ которого
$C(\rho(\widetilde{H}))=\rho(\widetilde{H})$ плотен в $H$. Поэтому $C$
продолжается единственным образом (по непрерывности) до унитарного
оператора (который мы также обозначим~$C$) $C:H\to H$. Оператор $C$
называется подъемом набора операторов $C_{0,k}$, $1\leqslant
k\leqslant n$.  Из определения $C$ следует, что $C(H_{k})=H_{k}$ ,
$1\leqslant k\leqslant n$. Из унитарности $C$ получим
$C(H_{k}^{\bot})=H_{k}^{\bot}$, $1\leqslant k\leqslant n$.

Очевидно, спуск оператора
$C$ совпадает с набором $C_{0,k}$, $1\leqslant k\leqslant n$.

\subsubsection{Неприводимость системы подпространств $S$}

Для последовательности индексов (пути)
$l=(i(1),\ldots,i(k-1),i(k))$ определим путь
$l^{*}=(i(k),i(k-1),\ldots,i(1))$. Ясно, что
$B_{l}^{*}=B_{l^{*}}$. Для двух путей
$l,l'$ таких, что конец
$l$ совпадает с началом $l'$,
$l=(i(1),\ldots,i(k-1),i(k))$,
$l'=(i(k),i(k+1),\ldots,i(m))$, определим путь
$ll'=(i(1),\ldots,i(k-1),i(k),i(k+1),\ldots,i(m))$.
Далее
$\alpha$ обозначает натуральное число,
$1\leqslant\alpha\leqslant n$. Обозначим через
$\mathcal{L}_{\alpha}$ множество путей
$l=(\alpha,\ldots,\alpha)$ с началом и концом
$\alpha$.
Отметим, что множество операторов
$B_{l},\,l\in\mathcal{L}_{\alpha}$, является $\ast\text{-}$\hspace{0pt}множеством, т.е.
если оператор $A$ принадлежит этому множеству, то оператор
$A^{*}$ также ему принадлежит.

\begin{proposition}\label{P:irreducibility}
Пусть $\alpha$ таково, что для произвольного
$k,\,1\leqslant k\leqslant n$, существует путь
$l=(\alpha,\ldots,k)$, для которого оператор
$B_{l}$ обратим. Тогда следующие утверждения эквивалентны:
\begin{enumerate}
\item
система подпространств
$S=(H;H_{1},\ldots,H_{n})$ неприводима,
\item
множество операторов
$B_{l},\,l\in\mathcal{L}_{\alpha}$, неприводимо.
\end{enumerate}
\end{proposition}
\begin{proof}
$(1)\Rightarrow(2)$
Предположим противное. Тогда существует оператор
$C_{0,\alpha}:H_{0,\alpha}\to H_{0,\alpha}$, отличный от $\lambda I_{H_{0,\alpha}},\,\lambda\in\mathbb{C}$, такой, что
$C_{0,\alpha}B_{l}=B_{l}C_{0,\alpha}$ для всякого пути
$l\in\mathcal{L}_{\alpha}$.
Поскольку множество
$B_{l},\,l\in\mathcal{L}_{\alpha}$, является
$\ast\text{-}$\hspace{0pt}множеством, то оператор
$C_{0,\alpha}$ можно выбрать унитарным.
Из условия утверждения следует, что для каждого
$k$ существует путь $l(k)=(k,\ldots,\alpha)$, для которого
$B_{l(k)}$ обратим.
Определим оператор
\begin{equation*}
C_{0,k}=B_{l(k)}C_{0,\alpha}B_{l(k)}^{-1}:H_{0,k}\to H_{0,k},\quad 1\leqslant k\leqslant n.
\end{equation*}
Отметим, что для $k=\alpha$ определение корректно, так как операторы $C_{0,\alpha}$ и $B_{l(\alpha)}$
коммутируют.

Покажем, что
$C_{0,k}$ унитарен.
Ясно, что
$C_{0,k}^{*}=(B_{l(k)}^{*})^{-1}C_{0,\alpha}^{*}B_{l(k)}^{*}$.
Таким образом,
$C_{0,k}$ обратим, и
\begin{equation*}
C_{0,k}^{*}C_{0,k}=(B_{l(k)}^{*})^{-1}C_{0,\alpha}^{*}B_{l(k)}^{*}B_{l(k)}C_{0,\alpha}B_{l(k)}^{-1}=
(B_{l(k)}^{*})^{-1}C_{0,\alpha}^{*}C_{0,\alpha}B_{l(k)}^{*}B_{l(k)}B_{l(k)}^{-1}=I_{H_{0,k}},
\end{equation*}
откуда следует унитарность $C_{0,k}$.

Покажем, что для произвольных
$i,j$
$C_{0,i}B_{i,j}=B_{i,j}C_{0,j}$. Это равенство равносильно равенству
$B_{l(i)}C_{0,\alpha}B_{l(i)}^{-1}B_{i,j}=B_{i,j}B_{l(j)}C_{0,\alpha}B_{l(j)}^{-1}$, что равносильно
\begin{equation}\label{E:irreduc}
B_{l(i)}^{*}B_{l(i)}C_{0,\alpha}B_{l(i)}^{-1}B_{i,j}=B_{l(i)}^{*}B_{i,j}B_{l(j)}C_{0,\alpha}B_{l(j)}^{-1}.
\end{equation}
Поскольку путь
$l(i)^{*}l(i)\in\mathcal{L}_{\alpha}$, оператор
$B_{l(i)}^{*}B_{l(i)}$ коммутирует с
$C_{0,\alpha}$. Поскольку путь
$l(i)^{*}(i,j)l(j)\in\mathcal{L}_{0,\alpha}$, оператор
$B_{l(i)}^{*}B_{i,j}B_{l(j)}$ коммутирует с
$C_{0,\alpha}$. Поэтому обе части равенства \eqref{E:irreduc} равны
$C_{0,\alpha}B_{l(i)}^{*}B_{i,j}$, и, таким образом, равенство
\eqref{E:irreduc} верно.

Поднимем семью унитарных операторов
$C_{0,k},\,1\leqslant k\leqslant n$, до унитарного оператора
$C:H\to H$. Тогда
$CP_{k}=P_{k}C,\,1\leqslant k\leqslant n$. Поскольку система подпространств
$S$ неприводима, то для некоторого
$\lambda\in\mathbb{C}$
$C=\lambda I_{H}$. Поэтому
$C_{0,\alpha}=\lambda I_{H_{0,\alpha}}$, получили противоречие.

$(2)\Rightarrow (1)$
Предположим, линейный непрерывный оператор
$C:H\to H$ коммутирует со всеми
$P_{k},\,1\leqslant k\leqslant n$. Пусть набор операторов
$C_{0,k},\,1\leqslant k\leqslant n$, получен спуском
$C$. Поскольку
$C_{0,\alpha}B_{l}=B_{l}C_{0,\alpha}$ для всякого
$l\in\mathcal{L}_{\alpha}$, то существует
$\lambda\in\mathbb{C}$, для которого
$C_{0,\alpha}=\lambda I_{H_{0,\alpha}}$. Рассмотрим произвольное
$1\leqslant k\leqslant n$ и выберем путь
$l=(\alpha,\ldots,k)$, для которого
$B_{l}$ обратим.
Поскольку
$C_{0,\alpha}B_{l}=B_{l}C_{0,k}$, то
$B_{l}(C_{0,k}-\lambda I_{H_{0,k}})=0$, откуда
$C_{0,k}=\lambda I_{H_{0,k}}$. Из доказанного следует, что для
$x\in\rho(\widetilde{H})$
$Cx=\lambda x$. Из плотности
$\rho(\widetilde{H})$ в $H$ следует, что
$C=\lambda I_{H}$. Это доказывает неприводимость системы подпространств
$S$.
\end{proof}

\section{Описание систем, удовлетворяющих условиям (Ang), (Com), (Ort)}
\label{S:staralgebras}

В этом разделе мы
\begin{enumerate}
\item получим описание всех систем подпространств, удовлетворяющих
  условиям (Ang), (Com), (Ort);

  \item получим описание всех \emph{неприводимых} унитарно
    неэквивалентных систем, удовлетворяющих условиям (Ang), (Com), (Ort);

  \item в качестве примера, приведем описание всех неприводимых
    унитарно неэквивалентных систем подпространств, удовлетворяющих
    (Ang), (Com), (Ort), в случае $m=3$ и $r=1$.
\end{enumerate}

Сначала покажем, что без ограничения общности можно считать, что
$\tau_{\{1,2k\}}=\tau_{\{1,2k+1\}}$ для всех
$1\leqslant k\leqslant m$.

\begin{lemma}\label{lemma:eq}
  Пусть $M,M_{1},M_{2}$ "--- подпространства $H$. Предположим, что
  выполнены следующие условия:
  \begin{enumerate}
  \item
    подпространства $M,M_{i}$ расположены друг относительно друга под углом
    $\varphi_{i}\in[0,\pi/2)$, $i=1,2$,
    \item
      ортопроекторы на подпространства
      $M_{1},M_{2}$ коммутируют.
  \end{enumerate}
  Тогда если
  $\varphi_{1}\neq\varphi_{2}$, то
  подпространства $M_{1},M_{2}$ ортогональны.
\end{lemma}
\begin{proof}
  Обозначим
  $Q$, $Q_{1}$, $Q_{2}$ ортопроекторы на
  $M$, $M_{1}$, $M_{2}$ соответственно. Определим
  $\mu_{k}=\cos\varphi_{k}$ для
  $k=1,2$. Имеем:
  \begin{align*}
    &Q_{1}Q_{2}=Q_{1}Q_{2}Q_{1}=\frac{1}{\mu_{2}^{2}}Q_{1}Q_{2}QQ_{2}Q_{1}=\\
    &=\frac{1}{\mu_{2}^{2}}Q_{2}Q_{1}QQ_{1}Q_{2}=\frac{\mu_{1}^{2}}{\mu_{2}^{2}}Q_{2}Q_{1}Q_{2}=\frac{\mu_{1}^{2}}{\mu_{2}^{2}}Q_{1}Q_{2},
  \end{align*}
  откуда
  $Q_{1}Q_{2}=0$.
\end{proof}

Итак, уменьшив $m$ (если это необходимо), можно считать, что
$\tau_{\{1,2k\}}=\tau_{\{1,2k+1\}}$ для всех $1\leqslant k\leqslant
m$.  Определим
\begin{equation*}
  \tau_k=\begin{cases}
  \tau_{\{1,2k\}}=\tau_{\{1, 2k+1\}}, & 1\leqslant k\leqslant m;\\
  \tau_{\{1,k+m+1\}}, & m+1\leqslant k\leqslant m+r.
  \end{cases}
\end{equation*}

\subsection{Описание всех систем подпространств
$S=(H;H_{1},\ldots,H_{N+1})$, удовлетворяющих условиям
(Ang), (Com), (Ort)}\label{SS:all}
Прежде всего сделаем несколько очевидных замечаний.

\textbf{1.} Нулевая система $S=(H;0,\ldots,0)$ удовлетворяет всем
нужным условиям. В дальнейшем будем рассматривать ненулевые системы
подпространств.  Отметим, что если система
$S=(H;H_{1},\ldots,H_{N+1})$ удовлетворяет условиям (Ang), и для
некоторого $k$ подпространство $H_{k}=0$, то, как легко видеть,
$H_{1}=\ldots=H_{N+1}=0$.  Таким образом, если система ненулевая, то
$H_{k}\ne0$, $1\leqslant k\leqslant N+1$.

\textbf{2.} Пусть $S=(H;H_{1},\ldots,H_{N+1})$ "--- ненулевая система
подпространств, удовлетворяющая (Ang), (Com), (Ort). Предположим,
$H_{1}+\ldots+H_{N+1}$ не плотно в $H$. Определим системы
\begin{equation*}
  S'=(H';H_{1},\ldots,H_{N+1})
  \quad\text{и}\quad
  S''=(H\ominus H';0,\ldots,0),
\end{equation*}
где $H'=\overline{H_{1}+\ldots+H_{N+1}}$.  Тогда $S=S'\oplus S''$,
$S'$ удовлетворяет (Ang), (Com), (Ort) и $S''$ является нулевой
системой. Таким образом, чтобы описать все системы подпространств
удовлетворяющие условиям (Ang), (Com), (Ort) достаточно описать
системы, для которых сумма~$H_{1}+\ldots+H_{N+1}$ плотна в $H$.

\textbf{3.} Предположим теперь, что $S=(H;H_{1},\ldots,H_{N+1})$ "---
система подпространств, удовлетворяющая (Ang), (Com), (Ort), такая,
что $H_{k}\neq 0$, $1\leqslant k\leqslant N+1$, и сумма
$H_{1}+\ldots+H_{N+1}$ плотна в $H$. Пусть $G=(G_{i,j},\,1\leqslant
i,j\leqslant N+1)$ "--- оператор Грама системы $S$. Тогда $S$ унитарно
эквивалентна системе $\mathcal{G}(H_{1},\ldots,H_{N+1};G)$. Поскольку
$H_{1},H_{k}$ расположены друг относительно друга под углом
$\theta_{\{1,k\}}$, то оператор $G_{1,k}/\tau_{\{1,k\}}$ унитарный.
Определим унитарные операторы $U_{0,k}:H_{1}\to H_{k}$, $1\leqslant
k\leqslant N+1$, формулами
\begin{equation*}
  U_{0,1}=I_{H_{1}},\qquad
  U_{0,k} = G_{1,k}^*/\tau_{\{1,k\}},\; 2\leqslant k\leqslant N+1
\end{equation*}
Положим $B_{i,j}=U_{0,i}^{*}G_{i,j}U_{0,j}$, $1\leqslant i,j\leqslant
N+1$, тогда $B_{1,k}=\tau_{\{1,k\}}I_{H_{1}}$, $2\leqslant k\leqslant
N+1$. Определим оператор
$B:\oplus_{k=1}^{N+1}H_{1}\to\oplus_{k=1}^{N+1}H_{1}$ блочным
разложением $B=(B_{i,j})$. Из утверждения \ref{P:unitequiv} следует,
что $\mathcal{G}(H_{1},\ldots,H_{N+1};G)$ унитарно эквивалентна
$\mathcal{G}(H_{1},\ldots,H_{1};B)$, а поэтому $S$ унитарно
эквивалентна $\mathcal{G}(H_{1},\ldots,H_{1};B)$.

Пусть теперь $S=(H;H_{1},\ldots,H_{N+1})=\mathcal{G}(H_{0},\ldots,H_{0};B)$
для некоторого гильбертова пространства $H_{0}$ и оператора
$B:\oplus_{k=1}^{N+1}H_{0}\to\oplus_{k=1}^{N+1}H_{0}$ такого, что
$B_{1,k}=\tau_{\{1,k\}}I_{H_{0}}$, $2\leqslant k\leqslant
N+1$. Выясним, каким условиям должен удовлетворять оператор~$B$, чтобы
система подпространств~$S$ удовлетворяла (Ang), (Com), (Ort). Для
этого воспользуемся результатами раздела \ref{SS:properties}.

Условие~(Ang) равносильно унитарности операторов
$B_{1,k}/\tau_{\{1,k\}}$, $2\leqslant k\leqslant N+1$.  Поскольку
$B_{1,k}=\tau_{\{1,k\}}I_{H_{0}}$, это условие выполнено.

Условие~(Ort) равносильно $B_{i,j}=0$ для всех пар различных $i,j$,
таких, что $\{i,j\}\notin E\cup E^c_m$.

Рассмотрим условие~(Com).  Условие $P_{2k}P_{2k+1}=P_{2k+1}P_{2k}$,
$1\leqslant k\leqslant m$, равносильно
\begin{equation}\label{E:com}
  B_{2k,2k+1}B_{2k+1,i}=B_{2k,2k+1}B_{2k+1,2k}B_{2k,i}\quad 1\leqslant i\leqslant N+1.
\end{equation}

При $i=1$ имеем равенство $\tau_{k}B_{2k,2k+1}=\tau_{k}B_{2k,2k+1}B_{2k+1,2k}$,
т.е.  $B_{2k,2k+1}$ "--- ортопроектор.

При $i=2k$ равенство \eqref{E:com} выполнено автоматически.

При $i=2k+1$ имеем $B_{2k,2k+1}=B_{2k,2k+1}B_{2k+1,2k}B_{2k,2k+1}$.
Это условие выполнено, поскольку $B_{2k,2k+1}$ "--- ортопроектор.

При $i\neq 1,2k,2k+1$ обе части равенства \eqref{E:com} равны $0$,
следовательно, оно выполнено.

Таким образом, оператор $B$ имеет вид
\begin{equation*}
B=
\begin{pmatrix}
          I& \tau_1I& \tau_1I&  \ldots& \tau_{m}I& \tau_m I& \tau_{m+1}I& \ldots& \tau_{m+r}I\\
    \tau_1I&          I&   Q_{1}& \ldots&              0&             0&                  0& \ldots&          0\\
    \tau_1I&  Q_{1}&           I& \ldots&              0&             0&                  0& \ldots&          0\\
     \vdots&  \vdots&  \vdots& \ddots&             0&             0&                  0& \ldots&          0\\
    \tau_mI&          0&         0&         0&             I&     Q_{m}&                  0& \ldots&          0\\
    \tau_mI&          0&         0&         0&    Q_{m}&              I&                  0& \ldots&          0\\
 \tau_{m+1}I&         0&         0&         0&             0&             0&                   I& \ldots&          0\\
         \vdots& \vdots& \vdots& \vdots&    \vdots&      \vdots&          \vdots& \ddots&         0\\
     \tau_{m+r}I&        0&         0&         0&            0&              0&                  0&  \ldots&          I
\end{pmatrix},
\end{equation*}
для некоторого набора ортопроекторов~$Q_k$, $1\leqslant k\leqslant m$, в дальнейшем будем обозначать его $B(Q_{1},\ldots,Q_{m})$.

Неотрицательность оператора $B(Q_1,\ldots,Q_{m})$, где~$Q_k$ "---
некоторый набор ортопроекторов в гильбертовом пространстве $H_0$,
является необходимым условием того, что для данного набора
ортопроекторов можно применить
$G\text{-}$\hspace{0pt}конструкцию. Для изучения вопроса, когда
оператор такого вида неотрицателен, нам понадобиться следующая лемма.

\begin{lemma}\label{lemma:B_min}
  Пусть $K$ "--- гильбертово пространство,
  $A_{1},\ldots,A_{n}$ "--- неотрицательные обратимые операторы в
  $K$.
  Пусть $y\in K$ и $\mu_k>0,\,1\leqslant k\leqslant n$.
  Если $u_{k}\in K$, $1\leqslant k\leqslant n$, и
  $\sum_{k=1}^{n}\mu_{k}u_{k}=y$, то
  \begin{equation*}
  \sum_{k=1}^{n}\langle\,A_{k}u_k,u_k\,\rangle \geqslant
    \left\langle\,\left(\sum_{j=1}^n\mu_j^2A_j^{-1}\right)^{-1}y, y\,\right\rangle,
  \end{equation*}
  причем равенство достигается тогда и только тогда, когда
  \begin{equation*}
  u_{k}=\mu_{k}A_{k}^{-1}(\sum_{j=1}^{n}\mu_{j}^{2}A_{j}^{-1})^{-1}y,\quad 1\leqslant k\leqslant n.
  \end{equation*}
\end{lemma}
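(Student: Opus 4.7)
The plan is to exhibit the minimizer explicitly and then argue by completing the square. Define the operator $T=(\sum_{j=1}^{n}\mu_j^2 A_j^{-1})^{-1}$; it is well defined because each $A_j^{-1}$ is a positive invertible operator, so the sum is positive and bounded below by, say, $\mu_1^2\|A_1\|^{-1}I$. Set the candidate minimizer $u_k^0=\mu_k A_k^{-1}Ty$; a one-line check gives $\sum_k \mu_k u_k^0=(\sum_k \mu_k^2 A_k^{-1})Ty=y$, so the candidate is admissible.

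Given any other admissible tuple $(u_k)$, I would write $u_k=u_k^0+w_k$, so that the constraint $\sum_k \mu_k u_k = y$ becomes the homogeneous relation $\sum_k \mu_k w_k = 0$. Expanding
\begin{equation*}
  \sum_k\langle A_k u_k, u_k\rangle=\sum_k\langle A_k u_k^0, u_k^0\rangle + 2\operatorname{Re}\sum_k\langle A_k u_k^0, w_k\rangle + \sum_k\langle A_k w_k, w_k\rangle,
\end{equation*}
the cross term equals $2\operatorname{Re}\langle Ty,\sum_k\mu_k w_k\rangle=0$, since by construction $A_k u_k^0=\mu_k Ty$. The remaining term $\sum_k\langle A_k w_k, w_k\rangle$ is nonnegative by positivity of the $A_k$ and vanishes if and only if every $w_k=0$, since each $A_k$ is invertible. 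This gives the inequality and characterises the equality case.

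It remains to evaluate the value at $u_k^0$: a direct calculation yields $\sum_k\langle A_k u_k^0, u_k^0\rangle=\sum_k\mu_k^2\langle Ty, A_k^{-1}Ty\rangle=\langle Ty, T^{-1}Ty\rangle=\langle Ty, y\rangle$, which is exactly the right-hand side of the asserted bound. I do not foresee a real obstacle. The only subtle point is the vanishing of the cross term, and this is precisely the reason for the choice $u_k^0=\mu_k A_k^{-1}Ty$: its $k$-dependence enters only through the scalar factor $\mu_k$, so the constraint $\sum_k\mu_k w_k=0$ kills the cross term. Conceptually, this is just the minimum-norm-solution computation in disguise: after substituting $v_k=A_k^{1/2}u_k$, the problem is to minimize $\sum_k\|v_k\|^2$ subject to a linear constraint $\sum_k\mu_k A_k^{-1/2}v_k=y$, whose solution is standard Hilbert-space pseudoinverse theory.
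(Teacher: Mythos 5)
Your proof is correct and follows essentially the same route as the paper: both arguments rest on the observation that the candidate minimizer satisfies $\mu_k^{-1}A_ku_k^0=Ty$ independently of $k$, so that after the substitution $u_k=u_k^0+w_k$ with $\sum_k\mu_kw_k=0$ the cross term in the quadratic expansion vanishes and only the nonnegative remainder $\sum_k\langle A_kw_k,w_k\rangle$ survives. The paper merely presents this in the reverse order (first postulating the stationarity condition \eqref{E:lemmacond} and then solving for $u_k$), while you exhibit the minimizer explicitly at the outset; the computations are identical.
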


\begin{proof}
  Обозначим $F(u_1,\dots,u_n)=\sum_{k=1}^n\langle\,
  A_ku_k,u_k\,\rangle.$ Рассмотрим
 \begin{equation*}
 F(u_{1}+h_{1},\ldots,u_{n}+h_{n})-F(u_{1},\ldots,u_{n})=\sum_{k=1}^{n}
 (\langle A_{k}u_{k},h_{k}\rangle +\langle A_{k}h_{k},u_{k}\rangle +\langle A_{k}h_{k},h_{k}\rangle).
 \end{equation*}
 Если
  $u_{1},\ldots,u_{n}\in K$ таковы, что
  \begin{equation}\label{E:lemmacond}
  \sum_{k=1}^{n}\mu_{k}u_{k}=y,\quad \mu_{1}^{-1}A_{1}u_{1}=\ldots=\mu_{n}^{-1}A_{n}u_{n},
  \end{equation}
  то для произвольных
  $h_{1},\ldots,h_{n}\in K$, таких, что $\sum_{k=1}^{n}\mu_{k}h_{k}=0$, имеем
  \begin{align*}
  \sum_{k=1}^{n}\langle A_{k}u_{k},h_{k}\rangle=\sum_{k=1}^{n}\langle\mu_{k}^{-1}A_{k}u_{k},\mu_{k}h_{k}\rangle=0,\\
  \sum_{k=1}^{n}\langle A_{k}h_{k},u_{k}\rangle=\sum_{k=1}^{n}\langle \mu_{k}h_{k},\mu_{k}^{-1}A_{k}u_{k}\rangle=0,
  \end{align*}
  и, следовательно,
  \begin{equation*}
  F(u_{1}+h_{1},\ldots,u_{n}+h_{n})-F(u_{1},\ldots,u_{n})=\sum_{k=1}^{n}\langle A_{k}h_{k},h_{k}\rangle.
  \end{equation*}
  Отсюда
  $F(u_{1}+h_{1},\ldots,u_{n}+h_{n})\geqslant F(u_{1},\ldots,u_{n})$ и равенство достигается тогда и только тогда, когда
  $h_{1}=\ldots=h_{n}=0$, т.е.
  $F(v_{1},\ldots,v_{n})\geqslant F(u_{1},\ldots,u_{n})$ для произвольных
  $v_{1},\ldots,v_{n}\in K$, таких, что
  $\sum_{k=1}^{n}\mu_{k}v_{k}=y$, и равенство достигается тогда и только тогда, когда
  $v_{k}=u_{k}$ для всех $1\leqslant k\leqslant n$.

  Найдем $u_{1},\ldots,u_{n}$, удовлетворяющие
  \eqref{E:lemmacond}. Пусть $\mu_{k}^{-1}A_{k}u_{k}=x$, $1\leqslant
  k\leqslant n$. Тогда $u_{k}=\mu_{k}A_{k}^{-1}x$. Имеем:
  $\sum_{k=1}^{n}\mu_{k}u_{k}=\sum_{k=1}^{n}\mu_{k}^{2}A_{k}^{-1}x=y$,
  откуда $x=(\sum_{k=1}^{n}\mu_{k}^{2}A_{k}^{-1})^{-1}y$. Таким
  образом,
  $u_{k}=\mu_{k}A_{k}^{-1}(\sum_{j=1}^{n}\mu_{j}^{2}A_{j}^{-1})^{-1}y$,
  $1\leqslant k\leqslant n$, и
  \begin{equation*}
    F(u_{1},\ldots,u_{n})=
    \sum_{k=1}^{n}
    \left\langle\,\mu_{k}(\sum_{j=1}^{n}\mu_j^2A_{j}^{-1})^{-1}y,\mu_{k}A_{k}^{-1}(\sum_{j=1}^{n}\mu_j^2A_{j}^{-1})^{-1}y\,\right\rangle=
    \langle\,(\sum_{j=1}^{n}\mu_j^2A_{j}^{-1})^{-1}y,y\,\rangle.
  \end{equation*}
\end{proof}

Предыдущая лемма позволяет доказать следующее утверждение.

\begin{proposition}\label{P:nonnegB}
  Пусть
  \begin{equation*}
    \xi(\tau)=1-\sum_{k=1}^{m+r}\tau_k^2.
  \end{equation*}
  Оператор $B=B(Q_1,\ldots, Q_m)$ неотрицателен тогда и только тогда,
  когда
  \begin{equation}\label{E:sumort}
    \sum_{k=1}^{m}\tau_k^2R_{k}\leqslant \xi(\tau) I,
  \end{equation}
  где $R_k=I-Q_k$, $1\leqslant k\leqslant m$.
\end{proposition}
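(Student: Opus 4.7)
The plan is to reduce $B\geqslant 0$ to~\eqref{E:sumort} by writing out $\langle Bx,x\rangle_{0}$ explicitly, diagonalising each pair block, and invoking Lemma~\ref{lemma:B_min}. First I expand $\langle Bx,x\rangle_{0}$ for $x=(x_{1},\ldots,x_{N+1})\in\widetilde{H}$, separating the diagonal term $\sum_{k}\|x_{k}\|^{2}$, the cross terms $2\,\mathrm{Re}\sum_{k\geqslant 2}\tau_{\{1,k\}}\langle x_{k},x_{1}\rangle$ coming from the first row/column, and the cross terms $2\,\mathrm{Re}\langle Q_{k}x_{2k+1},x_{2k}\rangle$ from the pair blocks. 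Completing the square in the free variable $x_{1}$ produces
\[
\langle Bx,x\rangle_{0}=\Bigl\|x_{1}+\sum_{k\geqslant 2}\tau_{\{1,k\}}x_{k}\Bigr\|^{2}+\Phi(x_{2},\ldots,x_{N+1}),
\]
so $B\geqslant 0$ is equivalent to $\Phi\geqslant 0$ on $\oplus_{k=2}^{N+1}H_{0}$.

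Next I apply the unitary substitution $u_{k}^{\pm}=(x_{2k}\pm x_{2k+1})/\sqrt{2}$ for $1\leqslant k\leqslant m$, and set $w_{k}=x_{k+m+1}$ for $m+1\leqslant k\leqslant m+r$. A direct computation using $Q_{k}^{*}=Q_{k}$ gives
\[
\|x_{2k}\|^{2}+\|x_{2k+1}\|^{2}+2\,\mathrm{Re}\langle Q_{k}x_{2k+1},x_{2k}\rangle=\langle(I+Q_{k})u_{k}^{+},u_{k}^{+}\rangle+\langle R_{k}u_{k}^{-},u_{k}^{-}\rangle,
\]
together with $x_{2k}+x_{2k+1}=\sqrt{2}\,u_{k}^{+}$. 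Since $u_{k}^{-}$ enters $\Phi$ only through the non-negative summand $\langle R_{k}u_{k}^{-},u_{k}^{-}\rangle$, it may be set to zero, and $\Phi\geqslant 0$ is reduced to the requirement that
\[
\sum_{k=1}^{m}\langle(I+Q_{k})u_{k}^{+},u_{k}^{+}\rangle+\sum_{k=m+1}^{m+r}\|w_{k}\|^{2}\geqslant\Bigl\|\sqrt{2}\sum_{k=1}^{m}\tau_{k}u_{k}^{+}+\sum_{k=m+1}^{m+r}\tau_{k}w_{k}\Bigr\|^{2}
\]
hold for all $u_{k}^{+},w_{k}\in H_{0}$.

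This inequality has exactly the shape required by Lemma~\ref{lemma:B_min}, with $A_{k}=I+Q_{k}$, $\mu_{k}=\sqrt{2}\tau_{k}$ for $k\leqslant m$ and $A_{k}=I$, $\mu_{k}=\tau_{k}$ for $m<k\leqslant m+r$. Each $A_{k}$ is strictly positive, and every $y\in H_{0}$ is realisable as $y=\sum_{k}\mu_{k}v_{k}$, so the infimum of $\sum_{k}\langle A_{k}v_{k},v_{k}\rangle$ under this constraint equals $\langle(\sum_{j}\mu_{j}^{2}A_{j}^{-1})^{-1}y,y\rangle$, and hence the inequality holds universally iff $\sum_{j}\mu_{j}^{2}A_{j}^{-1}\leqslant I$. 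Finally, $(I+Q_{k})^{-1}=\tfrac12(I+R_{k})$ because $Q_{k}$ is an orthogonal projection, so $\mu_{k}^{2}A_{k}^{-1}=\tau_{k}^{2}(I+R_{k})$ for $k\leqslant m$ and $\mu_{k}^{2}A_{k}^{-1}=\tau_{k}^{2}I$ for $k>m$; summing and rearranging yields $\sum_{k=1}^{m}\tau_{k}^{2}R_{k}\leqslant\xi(\tau)I$, which is~\eqref{E:sumort}. The main technical step is the pair diagonalisation in the second paragraph, without which the mixing of $x_{2k}$ and $x_{2k+1}$ by $Q_{k}$ would block any direct application of Lemma~\ref{lemma:B_min}; with this substitution in place, the remaining algebra is essentially forced.
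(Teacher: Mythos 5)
Your proof is correct and follows essentially the same route as the paper's: completing the square in the central coordinate, diagonalising each commuting pair by the sum/difference substitution (the paper uses $z_k=(x_k+y_k)/2$, $\delta_k=(x_k-y_k)/2$ instead of your unitary $1/\sqrt2$ normalisation, which only shifts constants), discarding the non-negative $\|R_k\delta_k\|^2$ terms, and then invoking Lemma~\ref{lemma:B_min} to reduce everything to $\sum_k\mu_k^2A_k^{-1}\leqslant I$. The final computation $(I+Q_k)^{-1}=\tfrac12(I+R_k)$ and the resulting inequality coincide with the paper's.
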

\begin{proof}
  Запишем условие неотрицательности
  \begin{equation*}
    \langle Bx,x\rangle\geqslant 0,
    \qquad x=(z,x_{1},y_{1},\ldots,x_{m},y_{m},v_1,\ldots, v_r)\in \oplus_{k=1}^{N+1}H_{0}.
  \end{equation*}
  Обозначим
  \begin{align*}
    z_0&=z_0(x_1,y_1,\ldots, x_m, y_m, v_1, \ldots, v_r) = \sum_{k=1}^{m}\tau_k(x_{k}+y_{k})+\sum_{k=1}^{r}\tau_{k+m} v_{k},\\
    B_0&=B_0(x_{1},y_{1},\ldots,x_{m},y_{m}) = \sum_{k=1}^{m}\left(\|x_{k}\|^{2}+\|y_{k}\|^{2}+2\mathrm{Re}\,\langle\,Q_{k}x_{k},y_{k}\,\rangle\right),
  \end{align*}
  тогда имеем условие:
  \begin{equation*}
    \langle\,Bx,x\,\rangle=
    \|z\|^{2}+2\mathrm{Re}\,\langle\,z,z_0\,\rangle+B_{0}+
    \sum_{k=1}^{r}\|v_k\|^2
    \geqslant 0.
  \end{equation*}
  Это условие эквивалентно условию
  \begin{equation*}
    \|z + z_0\|^{2} - \|z_0\|^2+ B_{0}+\sum_{k=1}^{r}\|v_k\|^2\geqslant 0.
  \end{equation*}
  Левая часть этого неравенства достигает наименьшего значения по $z$ при
  $z=-z_0$.  Таким образом, оператор $B$ неотрицателен тогда и только тогда, когда
  \begin{equation}\label{eq:without_z}
    -\|z_{0}\|^{2}+B_{0}+\sum_{k=1}^{r}\|v_k\|^2 \geqslant 0.
  \end{equation}
  Пусть
  \begin{equation}\label{E:forkerB}
  z_k=\dfrac{x_k+y_k}{2}, \delta_k=\dfrac{x_k-y_k}{2},
  \end{equation}
  тогда $x_{k}=z_{k}+\delta_{k}$ и $y_{k}=z_{k}-\delta_{k}$.
  Ясно, что
  \begin{equation}\label{E:z_0}
  z_{0}=2\sum_{k=1}^{m}\tau_{k}z_{k}+\sum_{k=1}^{r}\tau_{k+m}v_{r}.
  \end{equation}
  Кроме того,
  \begin{multline}\label{E:B_0}
      \quad\|x_{k}\|^{2}+\|y_{k}\|^{2}+2\mathrm{Re}\langle\,Q_{k}x_{k},y_{k}\,\rangle
      =\|R_{k}x_{k}\|^{2}+\|R_{k}y_{k}\|^{2}+\|Q_{k}(x_{k}+y_{k})\|^{2}\\
      =2\|R_{k}z_{k}\|^{2}+2\|R_{k}\delta_{k}\|^{2}+4\|Q_{k}z_{k}\|^{2}
      =\langle\,(2I+2Q_{k})z_{k},z_{k}\,\rangle+2\|R_{k}\delta_{k}\|^{2}.
  \end{multline}
  Используя равенства \eqref{E:z_0}, \eqref{E:B_0}, перепишем
  \eqref{eq:without_z} в виде
  \begin{equation*}
    \sum_{k=1}^{m}\,\langle2(I+Q_{k})z_{k},z_{k}\,\rangle+\sum_{k=1}^r\|v_k\|^2-
    \|2\sum_{k=1}^m \tau_kz_{k}+\sum_{k=1}^r\tau_{k+m}v_k\|^{2}
    +2\sum_{k=1}^{m}\|R_{k}\delta_{k}\|^{2}\geqslant 0.
  \end{equation*}
  Полученное неравенство выполнено для всех $z_{1},\ldots,z_{m}$,
  $\delta_{1},\ldots,\delta_{m}$, $v_{1},\ldots,v_{r}$, тогда и только
  тогда, когда неравенство
  \begin{equation}\label{E:nonneg1}
    \sum_{k=1}^{m}\,\langle\,2(I+Q_{k})z_{k},z_{k}\,\rangle+\sum_{k=1}^r\|v_{k}\|^{2}\geqslant
    \|2\sum_{k=1}^m\tau_kz_{k}+\sum_{k=1}^r\tau_{k+m}v_k\|^{2}.
  \end{equation}
  выполнено для всех $z_{1},\ldots,z_{m}$, $v_{1},\ldots,v_{r}$.

  Зафиксируем
  $2\sum_{k=1}^{m}\tau_{k}z_{k}+\sum_{k=1}^{r}\tau_{k+m}v_{k}=y$
  и обозначим
  \begin{equation*}
    A_{k}=\begin{cases}
    2(I+Q_{k}), &1\leqslant k\leqslant m\\
    I, &m+1\leqslant k\leqslant m+r,
    \end{cases}\qquad
    \mu_k = \begin{cases}
      2\tau_{k},  &1\leqslant k\leqslant m\\
      \tau_k, &m+1\leqslant k\leqslant m+r
    \end{cases}
  \end{equation*}
  Из леммы~\ref{lemma:B_min} следует, что наименьшее значение левой части ~\eqref{E:nonneg1}
  (при фиксированном~$y$) равно
  $\langle\,(\sum_{k=1}^{m+r}\mu_{k}^{2}A_{k}^{-1})^{-1}y,y\,\rangle$.
  Таким образом, оператор $B$ неотрицателен тогда и только тогда, когда
  \begin{equation}\label{eq:nonneg2}
    \left(\sum_{k=1}^{m+r}\mu_k^2A_{k}^{-1}\right)^{-1}\geqslant I,\quad \text{т.е.}\quad
    \sum_{k=1}^{m+r}\mu_{k}^{2}A_{k}^{-1}\leqslant I.
  \end{equation}
  Поскольку
  $(2(I+Q_{k}))^{-1}=\dfrac{1}{4}(I+R_k)$, то
  \begin{equation*}
  \sum_{k=1}^{m+r}\mu_{k}^{2}A_{k}^{-1}=
  \sum_{k=1}^{m}4\tau_{k}^{2}\cdot\frac{1}{4}(I+R_{k})+\sum_{k=m+1}^{m+r}\tau_{k}^{2}I=
  \sum_{k=1}^{m+r}\tau_{k}^{2}I+\sum_{k=1}^{m}\tau_{k}^{2}R_{k}.
  \end{equation*}
  Поэтому \eqref{eq:nonneg2} можно переписать в виде
  \begin{equation*}
    \sum_{k=1}^{m+r}\tau_{k}^{2}I+\sum_{k=1}^{m}\tau_{k}^{2}R_{k}\leqslant I,\quad \text{т.е.}\quad
    \sum_{k=1}^{m}\tau_{k}^{2}R_{k}\leqslant \left(1-\sum_{k=1}^{m+r}\tau_{k}^{2}\right)I.
  \end{equation*}
\end{proof}

Следующее утверждение дает описание $\mathrm{Ker}\,B$ и следует из доказательства утверждения
\ref{P:nonnegB} и леммы
\ref{lemma:B_min}. Напомним, что
$z_{k},\delta_{k}$, $1\leqslant k\leqslant m$, определены формулами \eqref{E:forkerB}.

\begin{proposition}\label{P:KerB}
  Пусть выполнено \eqref{E:sumort}. Элемент
  $x=(z,x_{1},y_{1},\ldots,x_{m},y_{m},v_{1},\ldots,v_{r})$
  принадлежит $\mathrm{Ker}\,B$ тогда и только тогда, когда
  \begin{enumerate}
  \item
    $z=-(2\sum_{k=1}^{m}\tau_{k}z_{k}+\sum_{k=1}^{r}\tau_{k+m}v_{k})$;
  \item
    $\delta_{k}\in\mathrm{Im}\,Q_{k}=\mathrm{Ker}\,R_k$ для всех
    $1\leqslant k\leqslant m$;
  \item
    $z_{k}=\frac{1}{2}\tau_{k}(I+R_{k})y$, $1\leqslant k\leqslant m$, и
    $v_{k}=\tau_{k+m}y$, $1\leqslant k\leqslant r$, где
    $y\in\mathrm{Ker}(\xi(\tau)I-\sum_{k=1}^{m}\tau_{k}^{2}R_{k})$.
  \end{enumerate}
\end{proposition}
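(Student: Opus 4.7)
The plan is to leverage Proposition~\ref{P:nonnegB}: under assumption~\eqref{E:sumort} we have $B\geqslant 0$, so $x\in\mathrm{Ker}\,B$ if and only if $\langle Bx,x\rangle=0$. It therefore suffices to revisit the chain of inequalities used to prove $\langle Bx,x\rangle\geqslant 0$ in Proposition~\ref{P:nonnegB} and to identify exactly when each inequality becomes an equality.

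First I would use the decomposition established in that proof,
\[
\langle Bx,x\rangle=\|z+z_0\|^2+\Bigl(-\|z_0\|^2+B_0+\sum_{k=1}^r\|v_k\|^2\Bigr),\qquad z_0=\sum_{k=1}^m\tau_k(x_k+y_k)+\sum_{k=1}^r\tau_{k+m}v_k,
\]
in which both summands are nonnegative (the second by~\eqref{eq:without_z}). Vanishing of $\|z+z_0\|^2$ combined with $x_k+y_k=2z_k$ gives $z=-\bigl(2\sum_k\tau_kz_k+\sum_k\tau_{k+m}v_k\bigr)$, which is condition~(1); the task reduces to describing when the second summand vanishes.

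Substituting~\eqref{E:B_0} and writing $y=2\sum_{k=1}^m\tau_kz_k+\sum_{k=1}^r\tau_{k+m}v_k$, I would rearrange the second summand as a sum of three nonnegative quantities:
\[
2\sum_{k=1}^m\|R_k\delta_k\|^2\;+\;\Bigl(\sum_{k=1}^m\langle 2(I+Q_k)z_k,z_k\rangle+\sum_{k=1}^r\|v_k\|^2-\bigl\langle(\textstyle\sum_j\mu_j^2A_j^{-1})^{-1}y,y\bigr\rangle\Bigr)\;+\;\Bigl(\bigl\langle(\textstyle\sum_j\mu_j^2A_j^{-1})^{-1}y,y\bigr\rangle-\|y\|^2\Bigr),
\]
with $A_k,\mu_k$ as in Proposition~\ref{P:nonnegB}: nonnegativity of the middle bracket is Lemma~\ref{lemma:B_min}, and of the last bracket is the operator inequality $\sum\mu_j^2A_j^{-1}\leqslant I$ proved in Proposition~\ref{P:nonnegB} from \eqref{E:sumort}. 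Hence each of the three brackets must vanish separately.

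The three vanishing conditions give the three parts of the claim. The first forces $\delta_k\in\mathrm{Ker}\,R_k=\mathrm{Im}\,Q_k$, i.e.\ condition~(2). Combined with $(\sum\mu_j^2A_j^{-1})^{-1}\geqslant I$, vanishing of the third forces $(\sum\mu_j^2A_j^{-1})y=y$, which by the computation from Proposition~\ref{P:nonnegB} is exactly $y\in\mathrm{Ker}(\xi(\tau)I-\sum_k\tau_k^2R_k)$. The equality case of Lemma~\ref{lemma:B_min} then pins the minimizers $u_k=\mu_kA_k^{-1}(\sum_j\mu_j^2A_j^{-1})^{-1}y=\mu_kA_k^{-1}y$; unpacking with $A_k^{-1}=\tfrac14(I+R_k)$ for $k\leqslant m$ and $A_k^{-1}=I$ for $k>m$ gives $z_k=\tfrac12\tau_k(I+R_k)y$ and $v_k=\tau_{k+m}y$, i.e.\ condition~(3). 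Sufficiency is obtained by running this chain in reverse. The hard part will be arranging the second summand cleanly as the displayed sum of three independent nonnegative pieces; once that decomposition is in place, the three conditions decouple and are read off directly.
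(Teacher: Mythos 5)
Your argument is correct and follows exactly the route the paper intends: the paper states Proposition~\ref{P:KerB} without proof, remarking only that it follows from the proofs of Proposition~\ref{P:nonnegB} and Lemma~\ref{lemma:B_min}, and your equality-case analysis (vanishing of $\|z+z_0\|^2$, of $\sum_k\|R_k\delta_k\|^2$, of the Lemma~\ref{lemma:B_min} gap, and of $\langle((\sum_j\mu_j^2A_j^{-1})^{-1}-I)y,y\rangle$) is precisely that omitted verification. No gaps.
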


\begin{corollary}
Пусть выполнено \eqref{E:sumort} и
$H_{0}$ конечномерно. Тогда
\begin{equation}\label{E:KerB}
\dim\mathrm{Ker}\,B=\sum_{k=1}^{m}\dim\mathrm{Im}\,Q_{k}+\dim\mathrm{Ker}(\xi(\tau)I-\sum_{k=1}^{m}\tau_{k}^{2}R_{k}).
\end{equation}
\end{corollary}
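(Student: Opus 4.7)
The plan is to deduce the dimension formula from Proposition~\ref{P:KerB} by exhibiting an explicit linear isomorphism between $\mathrm{Ker}\,B$ and the external direct sum
\[
W=\Bigl(\bigoplus_{k=1}^{m}\mathrm{Im}\,Q_{k}\Bigr)\oplus\mathrm{Ker}\Bigl(\xi(\tau)I-\sum_{k=1}^{m}\tau_{k}^{2}R_{k}\Bigr),
\]
and then reading off dimensions. Since $H_{0}$ is finite-dimensional, so is $W$, and by additivity of dimension for direct sums $\dim W$ equals the right-hand side of~\eqref{E:KerB}; hence an isomorphism $W\cong\mathrm{Ker}\,B$ suffices.

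First I would define a linear map $\Phi\colon W\to\mathrm{Ker}\,B$ by following the recipe laid out in items~(1)--(3) of Proposition~\ref{P:KerB}. Given $(\delta_{1},\ldots,\delta_{m},y)\in W$, set $v_{k}=\tau_{k+m}y$ for $1\leqslant k\leqslant r$ and $z_{k}=\tfrac{1}{2}\tau_{k}(I+R_{k})y$ for $1\leqslant k\leqslant m$ as prescribed by~(3); reconstruct $x_{k}=z_{k}+\delta_{k}$ and $y_{k}=z_{k}-\delta_{k}$ using~\eqref{E:forkerB}; and put $z=-\bigl(2\sum_{k=1}^{m}\tau_{k}z_{k}+\sum_{k=1}^{r}\tau_{k+m}v_{k}\bigr)$ as in~(1). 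Condition~(2) is automatic because each $\delta_{k}$ lies in $\mathrm{Im}\,Q_{k}$. Proposition~\ref{P:KerB} then certifies at once that $\Phi(W)\subseteq\mathrm{Ker}\,B$ and that $\Phi$ is surjective onto $\mathrm{Ker}\,B$.

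It then remains to check that $\Phi$ is injective. The antisymmetric parts are read off uniquely as $\delta_{k}=(x_{k}-y_{k})/2$. To recover $y$ I would use that $R_{k}$ is an orthogonal projection, so $I+R_{k}$ is invertible on all of $H_{0}$ (with inverse $I-\tfrac{1}{2}R_{k}$); combined with $\tau_{k}>0$ this lets one invert the relation $z_{k}=\tfrac{1}{2}\tau_{k}(I+R_{k})y$ to obtain $y$ from any $z_{k}$, or equivalently from any $v_{k}=\tau_{k+m}y$ when $r\geqslant 1$. Hence $\Phi$ is a linear bijection, and equating dimensions yields~\eqref{E:KerB}. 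I expect no substantive obstacle here: the analytic heavy lifting is already done in Propositions~\ref{P:nonnegB} and~\ref{P:KerB}, and what remains is routine bookkeeping, modulo checking the degenerate case $m=0$ (where the first summand in $W$ is empty and $y$ is recovered from some $v_{k}$; if also $r=0$, the identity $0=0$ holds, since $\xi(\tau)=1$ makes the second summand trivial as well).
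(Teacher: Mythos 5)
Your argument is correct and is exactly the route the paper intends: the corollary is stated as an immediate consequence of the parametrization of $\mathrm{Ker}\,B$ in Proposition~\ref{P:KerB} by the free data $\delta_{k}\in\mathrm{Im}\,Q_{k}$ and $y\in\mathrm{Ker}(\xi(\tau)I-\sum_{k}\tau_{k}^{2}R_{k})$, and you have merely made the resulting linear bijection (and its inverse, via the invertibility of $I+R_{k}$ or the factor $\tau_{k+m}>0$) explicit. No gap; the dimension count then follows by additivity over the direct sum.
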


Критерий унитарной эквивалентности (утверждение~\ref{P:unitequiv}) для
рассматриваемых систем можно сформулировать в терминах ортопроекторов
$Q_1,\dots, Q_m$.
\begin{proposition}\label{P:unitequiv_Q}
  Системы подпространств
  \begin{equation*}
    \mathcal{G}(H_{0},\ldots,H_{0};B(Q_{1},\ldots,Q_{m}))
    \quad \text{и}\quad
    \mathcal{G}(H'_{0},\ldots,H'_{0};B(Q_{1}',\ldots,Q_{m}'))
  \end{equation*}
  унитарно эквивалентны тогда и только тогда, когда наборы
  ортопроекторов $Q_{k},\,1\leqslant k\leqslant m$, и
  $Q_{k}',\,1\leqslant k\leqslant m$, унитарно эквивалентны;
\end{proposition}

Таким образом, используя $G$-конструкцию систем подпространств, мы
установили взаимно однозначное соответствие между ненулевыми системами
подпространств $S$, удовлетворяющими (Ang), (Com), (Ort), такими, что
$H_{1}+\ldots+H_{N+1}$ плотно в $H$, и наборами $m$ ортопроекторов
$R_{1},\ldots,R_{m}$ в некотором гильбертовом пространстве $H_{0}$,
удовлетворяющими неравенству~\eqref{E:sumort} (при этом системы
подпространств и наборы операторов рассматриваются с точностью до
унитарной эквивалентности).

Отметим, что необходимым условием выполнения \eqref{E:sumort} является
$\xi(\tau)\geqslant 0$. Поэтому если $\xi(\tau)<0$, то не существует
ненулевой системы подпространств $S$, удовлетворяющей (Ang), (Com),
(Ort).  В дальнейшем мы предполагаем, что $\xi(\tau)\geqslant 0$.

\subsection{Описание всех неприводимых унитарно неэквивалентных систем
$S$, удовлетворяющих (Ang), (Com), (Ort)}\label{SS:irred}

Перед тем как перейти к описанию всех неприводимых систем подпространств, отметим, что
\begin{itemize}
\item с точностью до унитарной эквивалентности существует единственная
  нулевая неприводимая система подпространств $S=(\mathbb{C}^{1};0,\ldots,0)$;
\item для любой ненулевой неприводимой системы
  подпространств~$S=(H;H_{1},\ldots,H_{N+1})$
  $H_{1}+\ldots+H_{N+1}$ плотно в $H$.
\end{itemize}
Кроме того, критерий неприводимости
(утверждение~\ref{P:irreducibility}) рассматриваемых систем в терминах
ортопроекторов~$Q_1,\dots,Q_m$ может быть сформулирован в следующем
виде.
\begin{proposition}\label{P:irred_Q}
  Система подпространств
  $\mathcal{G}(H_{0},\ldots,H_{0};B(Q_{1},\ldots,Q_{m}))$ неприводима
  тогда и только тогда, когда набор ортопроекторов $Q_{k},\,1\leqslant
  k\leqslant m$, неприводим.
\end{proposition}

Поэтому с точностью до унитарной эквивалентности все ненулевые
неприводимые системы подпространств, удовлетворяющие условиям (Ang),
(Com), (Ort) имеют вид
$S=\mathcal{G}(H_{0},\ldots,H_{0};B(Q_{1},\ldots,Q_{m}))$, где $H_{0}$
"--- гильбертово пространство, $Q_{1},\ldots,Q_{m}$ "--- неприводимая
семья ортопроекторов в $H_{0}$, такая, что для
ортопроекторов~$R_{k}=I-Q_{k}$, $1\leqslant k\leqslant m$, выполнено
неравенство~\eqref{E:sumort}.

Учитывая, что системы $S$ и $S'$ унитарно эквивалентны тогда и только
тогда, когда унитарно эквивалентны наборы ортопроекторов
$Q_1,\dots,Q_m$ и $Q'_1,\dots,Q'_m$, задача описания всех
неприводимых унитарно неэквивалентных ненулевых систем подпространств
$S$, удовлетворяющих (Ang), (Com), (Ort), эквивалентна задаче об
описании неприводимых унитарно неэквивалентных наборов ортопроекторов
$R_{1},\ldots,R_{m}$, удовлетворяющих \eqref{E:sumort}.

Если $\xi(\tau)=0$, то $R_{1}=\ldots=R_{m}=0$, т.е
$Q_{1}=\ldots=Q_{m}=I$. Поскольку набор $R_{1},\ldots,R_{m}$
неприводим, то $H_{0}=\mathbb{C}^{1}$. Используя формулу
\eqref{E:KerB}, получим $\dim\mathrm{Ker}\,B=m+1$. Непосредственно из
определения $G\text{-}$\hspace{0pt}конструкции системы подпространств
следует, что $\dim H=(N+1)-(m+1)=m+r$, $\dim H_{k}=\dim H_{0}=1$ для
всех $1\leqslant k\leqslant N+1$. Поэтому обобщенная размерность
системы $S$ равна $(m+r;1)$.

Рассмотрим случай $\xi(\tau)>0$.

Множество индексов $M=\{1,2,\dots,m\}$ разобьем на три части
\begin{equation*}
  M_{l}=\{k\in M\,|\, \tau_{k}^2< \xi(\tau) \},
  \quad
  M_{e} = \{k\in M\,|\, \tau_k^2=\xi(\tau) \},
  \quad
  M_{g}=\{k\in M\,|\, \tau_k^2>\xi(\tau) \}.
\end{equation*}
Без ограничения общности, будем считать, что для любых $k_1\in M_l$,
$k_2\in M_e$ и $k_3\in M_g$, выполнены неравенства $k_1<k_2<k_3$.

Если $i\in M_g$, то, очевидно, $R_{i}=0$.

Если $i\in M_e$, то $R_{i}R_{j}=0$ для всех $j\neq i$. Поскольку набор
$R_{1},\ldots,R_{m}$ неприводим, то $R_{i}=0$ или $R_{i}=I$.
Если $R_{i}=I$, то $R_{j}=0$ для всех $j\neq i$.

Предположим, для некоторого $i\in M_e$~ $R_{i}=I$ и $R_{j}=0$, $j\neq i$. Из неприводимости набора
$R_{1},\ldots,R_{m}$ следует, что $H_0=\mathbb{C}^{1}$.
Используя равенство~\eqref{E:KerB}, получим $\dim\mathrm{Ker}\,B=m$. Поэтому
$\dim H=(N+1)-m=m+r+1$, $\dim H_{k}=1$ для всех $1\leqslant k\leqslant
N+1$.  Поэтому обобщенная размерность системы $S$ равна $(m+r+1;1)$.

Таким образом, мы получили
$|M_e|$ неприводимых унитарно неэквивалентных ненулевых систем подпространств
$S$, удовлетворяющих
(Ang), (Com), (Ort), соответствующих элементам
$i\in M_{e}$.
Осталось рассмотреть случай, когда
$R_{i}=0$ для всех
$i\in M_e$. Тогда
\eqref{E:sumort} можно переписать в виде
\begin{equation}\label{E:sumort1}
\sum_{k\in M_{l}}\tau_{k}^{2}R_{k}\leqslant\xi(\tau)I.
\end{equation}
Рассмотрим следующие варианты для
$|M_{l}|$.

\textbf{1.} Пусть $|M_{l}|\geqslant 3$.  Хорошо известно, что задача
описания с точностью до унитарной эквивалентности неприводимой
$n\text{-}$\hspace{0pt}ки ортопроекторов при $n\geqslant 3$ не менее
сложна, чем задача описания с точностью до унитарной эквивалентности
неприводимой пары ограниченных самосопряженных операторов, т.е
является $\ast\text{-}$\hspace{0pt}дикой (см., например,
\cite{OstSam}).  Для исследования неравенства~\eqref{E:sumort1} нам
понадобиться <<усиленный>> вариант утверждения о
$\ast\text{-}$\hspace{0pt}дикости.

\begin{lemma}\label{lemma:wild}
  Для любого $\varepsilon>0$ задача описания неприводимых троек
  ортопроекторов $R_{1}$, $R_{2}$, $R_{3}$ с точностью до унитарной
  эквивалентности, удовлетворяющих условию
  \begin{equation*}
    R_{1}+R_{2}+R_{3}\leqslant(1+\varepsilon)I
  \end{equation*}
  является $\ast\text{-}$\hspace{0pt}дикой задачей.
\end{lemma}

\begin{proof}
  Пусть гильбертово пространство $L=l_{2}$.
  Определим $K=L\oplus L\oplus L$. Для трех операторов $B_{1,2},B_{1,3},B_{2,3}$ в $L$ определим
  тройку подпространств $K$
  \begin{equation*}
    K_{1}=\{(x,0,0)\mid x\in L\},\quad
    K_{2}=\{(B_{1,2}x,x,0) \mid x\in L\},\quad
    K_{3}=\{(B_{1,3}x,B_{2,3}x,x) \mid x\in L\}.
  \end{equation*}
   Пусть $R_{i}$ "--- ортопроектор на $K_{i}$, $i=1,2,3$.
   Если
  $\|B_{1,2}\|,\|B_{1,3}\|,\|B_{2,3}\|\to 0$, то
  $\|R_{1}+R_{2}+R_{3}-I_{K}\|\to 0$
  (это следует из формул для $R_{i}$, приведенных в \cite{Sunder}, лемма 3).
  Поэтому существует $\alpha>0$, такое, что если
  нормы $\|B_{1,2}\|,\|B_{1,3}\|,\|B_{2,3}\|$ не больше
  $2\alpha$, то $R_{1}+R_{2}+R_{3}\leqslant(1+\varepsilon)I_{K}.$

  Для пары самосопряженных операторов $A,B:L\to L$ с нормами
  $\|A\|\leqslant\alpha$ и $\|B\|\leqslant\alpha$ определим
  $B_{1,2}=\alpha I_{L},\,B_{2,3}=\alpha I_{L},\,B_{1,3}=A+iB.$
  Проверим справедливость следующих утверждений:

\textbf{1.}
    Если пара $\{A,B\}$ неприводима, то тройка ортопроекторов
    $\{R_{1},R_{2},R_{3}\}$ неприводима. Действительно, предположим
    противное. Тогда существует унитарный оператор $U:K\to K$,
    отличный от $\lambda I_{K},\,|\lambda|=1$, такой, что
    $UR_{i}=R_{i}U,\,i=1,2,3.$ Из теоремы 2 работы~\cite{Sunder} следует, что
    $U=\mathrm{diag}\,(U_{1},U_{2},U_{3})$, где унитарные операторы
    $U_{i}:L\to L$ удовлетворяют
    и $U_{j}B_{j,k}=B_{j,k}U_{k}$ при
    $j<k.$ Подставляя $(j,k)=(1,2)$ имеем $U_{1}=U_{2}.$ Подставляя
    $(j,k)=(2,3)$ имеем $U_{2}=U_{3}.$ Из равенства
    $U_{1}B_{1,3}=B_{1,3}U_{3}$ получаем: $U_{1}A=AU_{1}$ и
    $U_{1}B=BU_{1}$. Поэтому $U_{1}=\lambda I_{L},\,|\lambda|=1,$ а
    тогда $U=\lambda I_{K}.$ Получили противоречие.

\textbf{2.}
    Если пары $\{A,B\}$ и $\{A^{\prime},B^{\prime}\}$ унитарно
    неэквивалентны, то построенные по ним тройки ортопроекторов
    $\{R_{1},R_{2},R_{3}\}$ и
    $\{R_{1}^{\prime},R_{2}^{\prime},R_{3}^{\prime}\}$ унитарно
    неэквивалентны. Действительно, предположим противное. Тогда
    существует унитарный оператор $U:K\to K$, такой, что
    $UR_{i}=R_{i}'U,\,i=1,2,3.$ Из теоремы 2 работы~\cite{Sunder} следует, что
    $U=\mathrm{diag}\,(U_{1},U_{2},U_{3})$, где унитарные операторы
    $U_{i}:L\to L$ удовлетворяют
    $U_{j}B_{j,k}=B_{j,k}'U_{k}$ для всех
    $j<k$. Подставив $(j,k)=(1,2)$ имеем $U_{1}=U_{2}.$ Подставив
    $(j,k)=(2,3)$ имеем $U_{2}=U_{3}.$ Из равенства
    $U_{1}B_{1,3}=B_{1,3}'U_{3}$ имеем $U_{1}AU_{1}^{*}=A^{\prime}$ и
    $U_{1}BU_{1}^{*}=B^{\prime}$.  Получили противоречие.

  Итак, начальная задача содержит <<безнадежную>> задачу описания
  неприводимых унитарно неэквивалентных пар самосопряженных операторов
  $A,B$ в пространстве $L=l_{2}$, а потому и сама <<безнадежна>>.
\end{proof}

Поскольку $|M_{l}|\geqslant 3$, то $1,2,3 \in M_l$.  Обозначим
$\tau_{max} = \max\{\tau_1, \tau_2, \tau_3\}$ и пусть $\varepsilon =
\dfrac{\xi(\tau)}{\tau_{max}^2}-1>0$.  Пусть $R_{k}=0$ при $k\in
M_{l},\,k\geqslant 4$.  Для любых трех проекторов $R_1$, $R_2$, $R_3$,
таких, что
\begin{equation*}
  R_1+R_2+R_3\leqslant (1+\varepsilon) I
\end{equation*}
получим
\begin{equation*}
  \tau_1^2R_1+\tau_2^2R_2+\tau_3^2R_3
  \leqslant\tau_{max}^2(R_1+R_2+R_3)
  \leqslant\xi(\tau)I.
\end{equation*}
Таким образом, в случае $|M_l|\geqslant3$ задача описания неприводимых
унитарно неэквивалентных наборов ортопроекторов $R_{1},\ldots,R_{m}$,
удовлетворяющих \eqref{E:sumort1}, содержит подзадачу, которая, как мы
показали в предыдущей лемме, <<безнадежна>>, следовательно и сама
задача <<безнадежна>>.

\textbf{2.}  Пусть $M_{l}=\varnothing$. Тогда $R_{1}=\ldots=R_{m}=0$,
т.е.  $Q_{1}=\ldots=Q_{m}=I$. Поскольку набор $R_{1},\ldots,R_{m}$
неприводим, $H_{0}=\mathbb{C}^{1}$. Используя равенство
\eqref{E:KerB}, получим $\dim\mathrm{Ker}\,B=m$. Тогда $\dim
H=(N+1)-m=m+r+1$; $\dim H_{k}=1$ для всех $1\leqslant k\leqslant N+1$.
Таким образом, обобщенная размерность системы $S$ равна $(m+r+1;1)$.

\textbf{3.}  Пусть $|M_{l}|=1$. Тогда \eqref{E:sumort1} принимает вид
$\tau_{1}^{2}R_{1}\leqslant\xi(\tau)I$. Поскольку
$\tau_{1}^{2}<\xi(\tau)$, это неравенство выполнено для любого
ортопроектора $R_{1}$. Поскольку набор $R_{1},\ldots,R_{m}$ неприводим
и $R_{2}=\ldots=R_{m}=0$, то $H_{0}=\mathbb{C}^{1}$ и либо $R_{1}=0$,
либо $R_{1}=I$.

В случае $R_{1}=0$ имеем $Q_{1}=\ldots=Q_{m}=I$. Используя равенство
\eqref{E:KerB}, получим $\dim\mathrm{Ker}\,B=m$. Тогда $\dim
H=(N+1)-m=m+r+1$, $\dim H_{k}=1$ для всех $1\leqslant k\leqslant
N+1$. Поэтому обобщенная размерность системы $S$ равна $(m+r+1;1)$.

В случае $R_{1}=I$, $R_{2}=\ldots=R_{m}=0$, имеем  $Q_{1}=0$,
$Q_{2}=\ldots=Q_{m}=I$. Используя равенство \eqref{E:KerB}, получим
$\dim\mathrm{Ker}\,B=m-1$. Поэтому $\dim H=(N+1)-(m-1)=m+r+2$, $\dim
H_{k}=1$ при всех $1\leqslant k\leqslant N+1$. Обобщенная размерность
системы $S$ равна $(m+r+2;1)$.

\textbf{4.}  Пусть $|M_{l}|=2$.  В этом случае
неравенство~\eqref{E:sumort1} будет записано как
\begin{equation}\label{E:sumort:2}
  \tau_1^2R_1+\tau_2^2R_2\leqslant \xi(\tau)I.
\end{equation}

Хорошо известно следующее описание всех (не обязательно удовлетворяющих \eqref{E:sumort:2})
неприводимых пар ортопроекторов $R_{1},R_{2}$ в гильбертовом пространстве $H_{0}$
с точностью до унитарной эквивалентности (см., например, \cite{Halmos}).
Все такие пары ортопроекторов можно разделить на
\begin{enumerate}
\item четыре неприводимых пары ортопроекторов в $H_{0}=\mathbb{C}^{1}$:
  \begin{align*}
    \pi_{00}&:\, R_1 =0, R_2=0; \qquad
    \pi_{01}:\, R_1=0, R_2=I; \\
    \pi_{10}&:\, R_1 =I, R_2=0; \qquad
    \pi_{11}:\, R_1=I, R_2=I;
  \end{align*}
\item семейство пар $\pi_\varphi$, $\varphi\in(0,\pi/2)$ в $H_{0}=\mathbb{C}^{2}$:
\begin{equation}\label{E:two_projections}
  R_1 = \begin{pmatrix}
    1&0\\
    0&0
  \end{pmatrix},\qquad
  R_2 = \begin{pmatrix}
    \cos^2\varphi          & \cos\varphi\sin\varphi\\
    \cos\varphi\sin\varphi & \sin^2\varphi
  \end{pmatrix},
\end{equation}
\end{enumerate}

Из этих пар нам нужно выбрать те, для которых выполнено
неравенство~\eqref{E:sumort:2}. Рассмотрим следующие варианты.

\textbf{4.1.} В случае $H_{0}=\mathbb{C}^{1}$, $R_{1}=R_{2}=0$
неравенство~\eqref{E:sumort:2} выполнено, обобщенная размерность
системы $S$ равна $(m+r+1;1)$, так как $\dim\mathrm{Ker}\,B= m$.

\textbf{4.2.} Если $H_{0}=\mathbb{C}^{1}$, $R_{1}=I$ и $R_{2}=0$ или
$R_{1}=0$ и $R_{2}=I$, то неравенство~\eqref{E:sumort:2} выполнено и
обобщенная размерность системы $S$ равна $(m+r+2;1)$, так как
$\dim\mathrm{Ker}\,B= m-1$

\textbf{4.3.} Пусть $H_{0}=\mathbb{C}^{1}$, $R_{1}=R_{2}=I$.
Неравенство \eqref{E:sumort:2} выполнено тогда и только тогда, когда
$\xi(\tau)\geqslant\tau_{1}^{2}+\tau_{2}^{2}$. В этом случае
обобщенная размерность $S$ равна
\begin{enumerate}
\item
$(m+r+2;1)$ если $\xi(\tau)=\tau_{1}^{2}+\tau_{2}^{2}$, так как $\dim\mathrm{Ker}\,B= m-1$;
\item
$(m+r+3;1)$ если $\xi(\tau)>\tau_{1}^{2}+\tau_{2}^{2}$, так как $\dim\mathrm{Ker}\,B= m-2$.
\end{enumerate}

\textbf{4.4.}  Рассмотрим ситуацию, когда $H_{0}=\mathbb{C}^{2}$ и
ортопроекторы $R_{1},R_{2}$ заданы формулами
\eqref{E:two_projections}, $\varphi\in(0,\pi/2)$.  Неравенство
\eqref{E:sumort:2} выполнено тогда и только тогда, когда $(2\times
2)\text{-}$\hspace{0pt}матрица (оператор)
\begin{equation*}
M=\xi(\tau)I-\tau_{1}^{2}R_{1}-\tau_{2}^{2}R_{2}=\begin{pmatrix}
\xi(\tau)-\tau_{1}^{2}-\tau_{2}^{2}\cos^{2}\varphi  & -\tau_{2}^{2}\cos\varphi \sin\varphi\\
-\tau_{2}^{2}\cos\varphi \sin\varphi                & \xi(\tau)-\tau_{2}^{2}\sin^{2}\varphi
\end{pmatrix}
\end{equation*}
неотрицательно определена. Это равносильно тому, что диагональные
элементы и определитель матрицы $M$ неотрицательны. Элемент
$(M)_{1,1}\geqslant 0$ тогда и только тогда, когда
\begin{equation*}
\cos^{2}\varphi\leqslant\frac{\xi(\tau)-\tau_{1}^{2}}{\tau_{2}^{2}}.
\end{equation*}
Элемент $(M)_{2,2}>0$ при любом $\varphi$. Легко проверить, что
определитель
\begin{equation*}
\det M=(\xi(\tau)-\tau_{1}^2)(\xi(\tau)-\tau_2^2)-\tau_1^2\tau_2^2\cos^{2}\varphi.
\end{equation*}
Поэтому условие
$\det M\geqslant 0$ можно переписать в виде
\begin{equation*}
\cos^{2}\varphi\leqslant\frac{\xi(\tau)-\tau_{1}^{2}}{\tau_{2}^{2}}\frac{\xi(\tau)-\tau_{2}^{2}}{\tau_{1}^{2}}=\eta(\tau).
\end{equation*}

Рассмотрим следующие варианты.

\textbf{4.4.1.}  Предположим, что $\xi(\tau)\geqslant
\tau_{1}^{2}+\tau_{2}^{2}$. Тогда для произвольного
$\varphi\in(0,\pi/2)$ матрица $M$ положительно определена. Используя
формулу \eqref{E:KerB}, получаем
$\dim\mathrm{Ker}\,B=1+1+2(m-2)=2m-2$. Поэтому $\dim
H=2(N+1)-(2m-2)=2m+2r+4$, $\dim H_{k}=\dim H_{0}=2$ при всех
$1\leqslant k\leqslant N+1$. Обобщенная размерность системы $S$ равна
$(2m+2r+4;2)$.

\textbf{4.4.2.}  Предположим, что
$\xi(\tau)<\tau_1^2+\tau_2^2$. Определим угол
\begin{equation*}
  \varphi(\tau)=\arccos\sqrt{\eta(\tau)}\in(0,\pi/2).
\end{equation*}
Матрица $M$ неотрицательно определена тогда и только тогда, когда
$\varphi\in[\varphi(\tau),\pi/2)$.

Если $\varphi\in(\varphi(\tau),\pi/2)$, то $M$ положительно определена
и обобщенная размерность $S$ равна $(2m+2r+4;2)$.

Если $\varphi=\varphi(\tau)$, то $\mathrm{Ker}\,M$ одномерно, поэтому
$\dim\mathrm{Ker}\,B=2m-1$ и обобщенная размерность $S$ равна
$(2m+2r+3;2)$.


\subsection{Классификационная теорема}

Сформулируем результаты, полученные в подразделе~\ref{SS:irred} в виде
теоремы.

\begin{theorem}\label{T:main}
Если $\xi(\tau)<0$, то не существует ненулевой системы
подпространств~$S$, удовлетворяющей (Ang), (Com), (Ort).

В случае $\xi(\tau)=0$ с точностью до унитарной эквивалентности
существует единственная ненулевая неприводимая система
подпространств~$S$, удовлетворяющая условиям (Ang), (Com), (Ort). Ее
обобщенная размерность равна~$(m+r;1)$.

В случае $\xi(\tau)>0$ с точностью до унитарной эквивалентности все
ненулевые неприводимые системы подпространств~$S$, удовлетворяющие
(Ang), (Com), (Ort), описываются следующим образом.
\begin{enumerate}

\item $M_{l}=\varnothing:$
  \begin{enumerate}
  \item $|M_{e}|+1$ систем обобщенной размерности $(m+r+1;1)$.
  \end{enumerate}

\item $|M_{l}|=1:$
  \begin{enumerate}
  \item $|M_{e}|+1$ систем обобщенной размерности $(m+r+1;1);$
  \item одна система обобщенной размерности $(m+r+2;1)$.
  \end{enumerate}

\item $|M_{l}|=2$, $\sum\limits_{k\in M_{l}}\tau_{k}^{2}>\xi(\tau):$
  \begin{enumerate}
  \item $|M_{e}|+1$ систем обобщенной размерности $(m+r+1;1);$
  \item две системы обобщенной размерности $(m+r+2;1);$
  \item бесконечная семья систем обобщенной размерности $(2m+2r+4;2)$,
    параметризованная углом $\varphi\in(\varphi(\tau),\pi/2)$, где
    $\varphi(\tau)\in(0,\pi/2);$
  \item одна система обобщенной размерности $(2m+2r+3;2)$,
    соответствующая углу $\varphi=\varphi(\tau)$.
  \end{enumerate}

\item $|M_{l}|=2$ и $\sum\limits_{k\in M_{l}}\tau_{k}^{2}=\xi(\tau):$
  \begin{enumerate}
  \item $|M_{e}|+1$ систем обобщенной размерности $(m+r+1;1);$
  \item три системы обобщенной размерности $(m+r+2;1);$
  \item бесконечная семья систем обобщенной размерности $(2m+2r+4;2)$,
    параметризованная углом $\varphi\in(0,\pi/2)$.
  \end{enumerate}

\item $|M_{l}|=2$ и $\sum\limits_{k\in M_{l}}\tau_{k}^{2}<\xi(\tau):$
  \begin{enumerate}
  \item $|M_{e}|+1$ систем обобщенной размерности $(m+r+1;1);$
  \item две системы обобщенной размерности $(m+r+2;1);$
  \item одна система обобщенной размерности $(m+r+3;1);$
  \item бесконечная семья систем обобщенной размерности $(2m+2r+4;2)$,
    параметризованная углом $\varphi\in(0,\pi/2)$.
\end{enumerate}

\end{enumerate}

Если $|M_{l}|\geqslant 3$, то задача описания всех неприводимых
унитарно неэквивалентных систем $S$, удовлетворяющих (Ang), (Com),
(Ort), является $\ast\text{-}$\hspace{0pt}дикой.
\end{theorem}

\begin{corollary}
  Если $m\geqslant 3$ и $\tau_{k}<1/\sqrt{m+r+1}$ для
  $k=1,2,\ldots,m+r$, то задача описания всех неприводимых унитарно неэквивалентных
  систем подпространств $S$, удовлетворяющих (Ang), (Com), (Ort), является
  $\ast\text{-}$\hspace{0pt}дикой.
\end{corollary}

\subsection{Пример}
В качестве примера дадим описание с точностью до унитарной
эквивалентности всех ненулевых неприводимых систем
подпространств~$S=(H;H_{1},\ldots,H_{8})$, удовлетворяющих условиям
(Ang), (Com), (Ort), в случае, когда $m=3$, $r=1$, а функция $\tau =
\tau(\tau_0)$, $\tau_{0}\in(0,1/3)$, задана равенствами
$\tau_{1}=\tau_{0}$, $\tau_{2}=\sqrt{2}\tau_{0}$,
$\tau_{3}=2\tau_{0}$, $\tau_{4}=3\tau_{0}$. В рассматриваемом случае
$\xi(\tau)=1-16\tau_{0}^{2}$.

Если $\tau_{0}\in(0,1/\sqrt{20})$, то задача описания всех
неприводимых унитарно неэквивалентных систем $S$, удовлетворяющих
(Ang), (Com), (Ort), является $\ast\text{-}$\hspace{0pt}дикой.

На отрезке $\tau_{0}\in[1/\sqrt{20}, 1/4]$ с точностью до унитарной эквивалентности все
ненулевые неприводимые системы подпространств~$S$, удовлетворяющие
(Ang), (Com), (Ort), описываются следующим образом.
\begin{enumerate}
\item $\tau_{0}=1/\sqrt{20}:$
  \begin{enumerate}
  \item две системы обобщенной размерности $(5;1);$
  \item две системы обобщенной размерности $(6;1);$
  \item одна система обобщенной размерности $(7;1);$
  \item бесконечная семья систем обобщенной размерности $(12;2)$,
    параметризованная углом $\varphi\in(0,\pi/2)$.
  \end{enumerate}

\item $\tau_{0}\in(1/\sqrt{20},1/\sqrt{19}):$
  \begin{enumerate}
  \item одна система обобщенной размерности $(5;1);$
  \item две системы обобщенной размерности $(6;1);$
  \item одна система обобщенной размерности $(7;1);$
  \item бесконечная семья систем обобщенной размерности $(12;2)$,
    параметризованная углом $\varphi\in(0,\pi/2)$.
  \end{enumerate}

\item $\tau_{0}=1/\sqrt{19}:$
  \begin{enumerate}
  \item одна система обобщенной размерности $(5;1);$
  \item три системы обобщенной размерности $(6;1);$
  \item бесконечная семья систем обобщенной размерности
    $(12;2)$, параметризованная углом
    $\varphi\in(0,\pi/2)$.
  \end{enumerate}

\item $\tau_{0}\in(1/\sqrt{19},1/\sqrt{18}):$
  \begin{enumerate}
  \item одна система обобщенной размерности $(5;1);$
  \item две системы обобщенной размерности $(6;1);$
  \item бесконечная семья систем обобщенной размерности $(12;2)$,
    параметризованная углом $\varphi\in(\varphi(\tau),\pi/2)$, где
    \begin{equation*}
      \varphi(\tau)=
      \arccos\left(\frac{(1-17\tau_{0}^{2})(1-18\tau_{0}^{2})}{2\tau_{0}^{4}}\right)^{1/2}
      \in(0,\frac{\pi}{2}),
    \end{equation*}
  \item одна система обобщенной размерности $(11;2)$. Этой системе
    соответствует угол $\varphi=\varphi(\tau)$.
  \end{enumerate}

\item $\tau_{0}=1/\sqrt{18}:$
  \begin{enumerate}
  \item две системы обобщенной размерности $(5;1);$
  \item одна система обобщенной размерности $(6;1).$
  \end{enumerate}

\item $\tau_{0}\in(1/\sqrt{18},1/\sqrt{17}):$
  \begin{enumerate}
  \item одна система обобщенной размерности $(5;1);$
  \item одна система обобщенной размерности $(6;1)$.
  \end{enumerate}

\item $\tau_{0}=1/\sqrt{17}:$
  \begin{enumerate}
  \item две системы обобщенной размерности $(5;1);$
  \end{enumerate}

\item $\tau_{0}\in(1/\sqrt{17},1/4):$
  \begin{enumerate}
  \item одна система обобщенной размерности $(5;1);$
  \end{enumerate}

\item $\tau_{0}=1/4:$
  \begin{enumerate}
  \item одна система обобщенной размерности $(4;1);$
\end{enumerate}

\end{enumerate}

Если $\tau_{0}\in(1/4,1/3)$, то не существует ненулевой системы
подпространств $S$, удовлетворяющей (Ang), (Com), (Ort).

\end{document}